\newtheorem{them}{Theorem}
\newtheorem{lem}{Lemma}
\newtheorem{quest}{Question}
\newtheorem{defn}{Definition}
\newtheorem{cl}{Claim}
\newtheorem{exm}{Example}
\newtheorem{cor}{Corollary}
\newtheorem{ob}{Observation}
\newtheorem{prop}{Proposition}
\newtheorem{conj}{Conjecture}
\newtheorem{rmk}{Remark}
\newtheorem{problem}{Problem}
\newcommand{\qed}{$\Box$}
\let\oldenumerate\enumerate
\renewcommand{\enumerate}{
  \oldenumerate
  \setlength{\itemsep}{0pt}
  \setlength{\parskip}{0pt}
  \setlength{\parsep}{0pt}
}
\begin{document}

\title{On irredundance coloring and irredundance compelling coloring of graphs}

\author{$^{\dagger}$David Ashok Kalarkop and $^{\dagger, \S}$Pawaton Kaemawichanurat\thanks{Research
supported by National Research Council of Thailand (NRCT) and King Mongkut's University of Technology Thonburi (N42A660926)}
\\ \\
$^{\dagger}$Department of Mathematics, Faculty of Science\\
King Mongkut's University of Technology Thonburi,\\
Thailand\\
$^{\S}$Mathematics and Statistics with Application (MaSA)
\\
\small \tt Email: david.ak123@gmail.com, pawaton.kae@kmutt.ac.th}

\date{}
\maketitle


%

\begin{abstract}
	Irredundance coloring of $G$ is a proper coloring in which there exists a maximal irredundant set $R$ such that all the vertices of $R$ have different colors. The minimum number of colors required for an irredundance coloring of $G$ is called the irredundance chromatic number of $G$, and is denoted by $\chi_{i}(G)$. Irredundance compelling coloring of $G$ is a proper coloring of $G$ in which every rainbow committee (the set containing a vertex of each color) is an irredundant set of $G$. The maximum number of colors required for an irredundance compelling coloring of $G$ is called the irredundance compelling chromatic number of $G$, and is denoted by $\chi_{irc}(G)$. In this paper, we make a detailed study on $\chi_{i}(G)$, $\chi_{irc}(G)$ and its relation to other coloring and domination parameters

\end{abstract}

{\small \textbf{Keywords: irredundance, irredundance chromatic number, compelling coloring}} \\
\indent {\small \textbf{AMS subject classification: 05C15, 05C69, 05C10} }

\section{Introduction and Motivation}
Let $G$ denote a finite simple undirected connected graph with vertex set $V(G)$ and edge set $E(G)$. For $H \subseteq V(G)$, $G[H]$ denotes the subgraph induced by $H$. The \emph{open neighborhood} $N_{G}(v)$ of a vertex $v$ in $G$ is $\{u \in V(G) | uv \in E(G)\}$. Further, the \emph{closed neighborhood} $N_{G}[v]$ of a vertex $v$ in $G$ is $N_{G}(v) \cup \{v\}$. The \emph{degree} $deg_{G}(v)$ of a vertex $v$ in $G$ is $|N_{G}(v)|$. For a subset $S\subseteq V(G)$, we let $N_{G}(S) = \cup_{v \in S}N_{G}(v)$ and $N_{G}[S] = N_{G}(S) \cup S$. For a vertex $v \in S$, the \emph{private neighbor set} of $v$ with respect to $S$ is $pn[v,S] = N_{G}[v] - N_{G}[S - \{v\}]$. A vertex $u \in pn[v,S]$ is called a \emph{private neighbor} of $v$. The minimum cardinality of a vertex cut set of a graph $G$ is called the \emph{connectivity} and is denoted by $\kappa(G)$. If $G$ has $S = \{a\}$ as a minimum cut set, then $G$ contains $a$ as a \emph{cut vertex} and $\kappa(G) = 1$. Similarly, the minimum cardinality of an edge cut set of a graph $G$ is called the \emph{edge connectivity} and is denoted by $\kappa'(G)$. If $G$ has $L = \{e\}$ as a minimum edge cut set, then $G$ contains $e$ as a \emph{bridge} and $\kappa'(G) = 1$. We abbreviate $deg_{G}(v), N_{G}(v), N_{G}(S), N_{G}[v]$ and $N_{G}[S]$ to $deg(v), N(v), N(S), N[v]$ and $N[S]$, respectively. For basic graph theoretical definitions and terminologies, refer to \cite{EF}.
\vskip 5 pt

\indent Graph coloring, domination and irredundance in graphs are some of the important areas in graph theory and finds rich applications in many fields. A coloring (proper coloring) of a graph $G$ is the assignment of colors to the vertices of $G$ in such a way that any two adjacent vertices receive different colors. The minimum number of colors required for a coloring of $G$ is said to be the \textit{chromatic number} of $G$, denoted by $\chi(G)$. The coloring $\mathcal{C}=(V_1,V_2, \ldots, V_k)$ of $G$ partitions $V(G)$ into independent sets $V_i$ (for $1 \leq i \leq k$). The set $V_i$ is said to be the color class and we let $col(v)$ to denote the color of the vertex $v$. For more details on graph coloring, refer to \cite{CD}. A subset $D$ of $V(G)$ is a \emph{dominating set} of $G$ if every vertex in $V(G)-D$ is adjacent to a vertex in $D$. The minimum cardinality of a dominating set of $G$ is said to be the \textit{domination number} of $G$, denoted by $\gamma(G)$. For more details on domination in  graphs and its applications, refer to \cite{ST, UV}. A set $S \subseteq V(G)$ is said to be \textit{irredundant} if every vertex of $S$ has at least one private neighbor or equivalently $S$ is said to be irredundant if for each $v \in S$, either $v$ is isolated in $G[S]$ (subgraph induced on $S$) or $v$ has a private neighbor in $V-S$. A set $S$ is said to be a \textit{maximal irredundant set} of $G$ is $S \cup \{v\}$ is not an irredundant set of $G$, for every $v \in V(G)-S$. The minimum cardinality of a maximal irredundant set of $G$ is called the \textit{lower irredundance number}, and is denoted by $ir(G)$. For more details on irredundance in graphs, refer to \cite{ST}
\vskip 5 pt

\indent In recent years, many researchers have worked on problems involving both domination and coloring parameters like dominator coloring \cite {AB, GH, IJ, KL}, global dominator coloring \cite{QR, WX, YZ}, gamma coloring \cite{MN} , compelling coloring \cite{cd} and so on. Let $\mathcal{C}=(V_1,V_2, \ldots, V_k)$ be a coloring of $G$. A vertex $v$ is said to dominate color class $V_i$, if $v$ is adjacent to all the vertices of $V_i$ or $V_i =\{v\}$. A vertex $v$ is said to anti-dominate color class $V_i$, if $v$ is not adjacent to any vertex of $V_i$. A \textit{rainbow committee ($RC$)} is a set containing a vertex of each color. The \textit{Dominator coloring} was graphs was introduced by Gera et al. \cite{KL}. It is coloring of $G$ in which every vertex dominates a color class and minimum number of colors required for a dominator coloring of $G$ is called dominator chromatic number, and is denoted by $\chi_{d}(G)$. The \textit{Global dominator coloring} was graphs was introduced by Hamid and Rajeshwari \cite{QR}. It is the dominator coloring of $G$ in which every vertex anti-dominates a color class and minimum number of colors required for a global dominator coloring of $G$ is called global dominator chromatic number, and is denoted by $\chi_{gd}(G)$. The \textit{Gamma coloring} of graphs was introduced by Gnanaprakasam and Hamid \cite{MN}. It is the coloring in which there exists a dominating set $D$ of $G$ such that all the vertices of $D$ receive different colors. The minimum number of colors required for a gamma coloring of $G$ is said to be gamma chromatic number, and is denoted by $\chi_{\gamma}(G)$. The \textit{Compelling coloring} of graphs was introduced by Bachstein et al. \cite{cd}. It is the coloring of $G$ compelling property $\mathcal{P}$ if every $RC$ satisfies property $\mathcal{P}$. The $\mathcal{P}$-compelling chromatic number of $G$ is the minimum number of colors required for $\mathcal{P}$-compelling coloring of $G$, and is denoted by $\chi_{\mathcal{P}}(G)$. The authors proved that compelling coloring is a generalization of dominator coloring. The $\mathcal{P}$-compelling (where $\mathcal{P}$: dominating set of $G$) coloring of $G$ is a dominator coloring of $G$.
\vskip 5 pt

\indent Motivated by the results on the parameters involving coloring and domination, we make an attempt to study the problems involving coloring and irredundance. In this paper we introduce new concepts of colorings, called irredundance coloring of graphs (this parameter is different from irredundant coloring which was introduced by Arumugam et al. \cite{ab}) and irredundance compelling coloring of graphs in Section \ref{def}. The results of our study are presented in Sections \ref{ircoloring} and \ref{compelir} while open problems are given in Section \ref{problem}.
\vskip 5 pt
{ We shall use the following result.
	\begin{prop}\cite{ST}\label{intro}
		Every minimal dominating set in a graph $G$ is a maximal irredundant set of $G$.
	\end{prop}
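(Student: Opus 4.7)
The plan is to establish the two required properties of $D$ separately: that $D$ is irredundant, and that $D$ is maximal with respect to this property.

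For irredundance, let $D$ be a minimal dominating set and fix $v \in D$. By minimality, $D \setminus \{v\}$ is not dominating, so there exists a vertex $u$ with $u \notin N[D \setminus \{v\}]$. I would split into two cases based on whether $u = v$ or $u \neq v$. If $u = v$, then $v$ has no neighbor in $D \setminus \{v\}$, so $v$ itself is a private neighbor of $v$ with respect to $D$ (equivalently, $v$ is isolated in $G[D]$). If $u \neq v$, then the fact that $D$ dominates $u$ together with $u \notin N[D \setminus \{v\}]$ forces $u \in N(v)$ and $u \notin N[D \setminus \{v\}]$, i.e., $u \in pn[v,D]$. Either way, $v$ has a private neighbor, so $D$ is irredundant.

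For maximality, pick any $w \in V(G) \setminus D$ and consider $D' = D \cup \{w\}$. Because $D$ is a dominating set, every vertex of $G$ lies in $N[D]$; in particular $w \in N[D]$ and $N(w) \subseteq N[D]$, so $N[w] \subseteq N[D] = N[D' \setminus \{w\}]$. Therefore $pn[w, D'] = N[w] \setminus N[D' \setminus \{w\}] = \emptyset$, which means $D'$ fails the irredundance condition at $w$. Hence no vertex can be added to $D$ while preserving irredundance, and $D$ is a maximal irredundant set.

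The argument is entirely routine; the only step that requires care is the first case analysis, where one must correctly invoke the definition of private neighbor when the ``undominated'' vertex $u$ coincides with $v$ itself. Once that is handled cleanly, the maximality half follows immediately from the dominating property of $D$.
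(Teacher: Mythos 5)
Your proof is correct and complete; the paper itself states this proposition as a cited result from Haynes, Hedetniemi and Slater without reproducing a proof, and your argument is exactly the standard one from that source: minimality of the dominating set yields a private neighbor for each $v\in D$ (with the two cases $u=v$ and $u\neq v$ handled properly), and the dominating property forces $pn[w, D\cup\{w\}]=\emptyset$ for any $w\notin D$, giving maximality. Nothing to add.
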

}
 \section{Definitions and notations}\label{def}
In this section, we give definition of two new concept of graph coloring. The first of which was motivated by \cite{MN} while the second was motivated by \cite{cd}.
\vskip 5 pt

\subsection{Irredundance coloring}
\begin{defn}\label{def1}
 	A proper coloring of a graph $G$ is said to be an irredundance coloring of $G$ if there exists a maximal irredundant set $R$ such that all the vertices of $R$ receive different colors. The minimum number of colors required for an irredundance coloring of $G$ is called the irredundance chromatic number of $G$, and is denoted by $\chi_{i}(G)$.
\end{defn}
\vskip 5 pt

\indent By Definition \ref{def1}, we point out in the following remark that there always exists an irredundance coloring for all graphs.
\vskip 5 pt

 \begin{rmk}
 	Every graph $G$ admits an irredundance coloring. Consider a trivial coloring $\mathcal{C}$ of $G$ in which each vertex of $G$ is given a different color. Let $R$ be a maximal irredundant set of $G$. Then all the vertices of $R$ receive different colors in the coloring $\mathcal{C}$ and hence $\mathcal{C}$ is an irredundance coloring of $G$.
 \end{rmk}

 \begin{rmk}
 	Let $S \subseteq V(G)$. If all the vertices of $S$ receive different colors in the coloring $\mathcal{C}$ of $G$, then $S$ is said to be $\mathcal{C}$-colorful. Therefore the coloring $\mathcal{C}$ of $G$ is an irredundance coloring if there exists a maximal irredundant set $R$ of $G$ such that $R$ is $\mathcal{C}$-colorful.
 \end{rmk}
\vskip 5 pt


\indent The following examples establish $\chi_{i}(G)$ when the graph $G$ is bipartite. Interestingly, even there exists a tree $G$ in which $\chi_{i}(G) > \chi(G)$ as illustrated in Figure \ref{eexample1}.
\vskip 5 pt

 \begin{exm}\label{exm1}
 	Consider the complete bipartite graph $K_{m,n}$, where $m,n \geq 2$ and let $v_1v_2 \in E(K_{m,n})$. Then the set $\{v_1,v_2\}$ is colorful in any $\chi$-coloring of $K_{m,n}$. Therefore $\chi_{i}(K_{m,n})=\chi(K_{m,n})=2$.

 \end{exm}

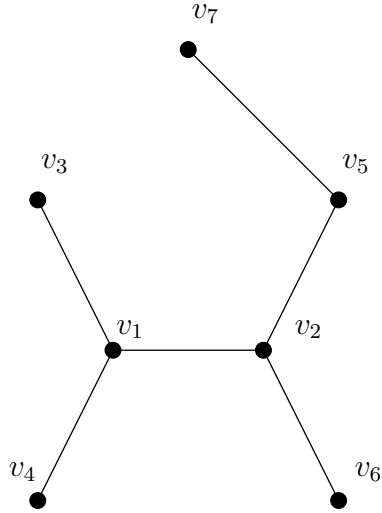
\begin{figure}[h!]
	\begin{center}
		\begin{tikzpicture}[line cap=round,line join=round,>=triangle 45,x=1cm,y=1cm]
			\clip(-4,-3.25) rectangle (5,5);
			\draw [line width=0.5pt] (-1,0)-- (1,0);
			\draw [line width=0.5pt] (-2,2)-- (-1,0);
			\draw [line width=0.5pt] (2,2)-- (1,0);
			\draw [line width=0.5pt] (-1,0)-- (-2,-2);
			\draw [line width=0.5pt] (1,0)-- (2,-2);
			\draw [line width=0.5pt] (2,2)-- (0,4);
			\begin{scriptsize}
				\draw [fill=black] (-2,2) circle (3pt);
				\draw[color=black] (-1.77,2.48) node {\large{$v_3$}};
				\draw [fill=black] (2,2) circle (3pt);
				\draw[color=black] (2.23,2.48) node {\large{$v_{5}$}};
				\draw [fill=black] (-1,0) circle (3pt);
				\draw[color=black] (-0.77,0.28) node {\large{$v_1$}};
				\draw [fill=black] (1,0) circle (3pt);
				\draw[color=black] (1.6,0.28) node {\large{$v_2$}};
				\draw [fill=black] (-2,-2) circle (3pt);
				\draw[color=black] (-2.2,-1.62) node {\large{$v_4$}};
				\draw [fill=black] (2,-2) circle (3pt);
				\draw[color=black] (2.4,-1.62) node {\large{$v_6$}};
				\draw [fill=black] (0,4) circle (3pt);
				\draw[color=black] (0.23,4.48) node {\large{$v_7$}};
			\end{scriptsize}
		\end{tikzpicture}
	\caption{A tree $G$ with $\chi_{i}(G)=3$ }
	\label{eexample1}
	\end{center}
\end{figure}

\vskip 15 pt

\begin{exm}
Consider the tree $G$ shown in Figure \ref{eexample1}. $\mathcal{C}=(\{v_1, v_5, v_6\}, \{v_2, v_3, v_4, v_7\})$ is the unique $\chi$-coloring of $G$. But $\mathcal{C}$ is not an irredundance coloring of $G$ since any set of the form $\{v_i\}$ or $\{v_i, v_j\}$ (where $col(v_i)\neq col(v_j)$) is not a maximal irredundant set. Therefore $\chi_{i}(G)\geq 3$. Now the coloring $\mathcal{C^\prime}=(\{v_1, v_6\}, \{v_2, v_3, v_4, v_7\}, \{v_5\})$ is an irredundance coloring of $G$ with the maximal irredundant set $\{v_1, v_2, v_5\}$ being $\mathcal{C^\prime}$-colorful. Hence $\chi_{i}(G)\leq 3$ which implies $\chi_{i}(G)= 3$.
\end{exm}
\vskip 5 pt

\indent We finish this subsection by pointing out an inequality chain between some chromatic numbers.
\vskip 5 pt

\begin{ob}
Every gamma coloring is an irredundance coloring. Let $\mathcal{C}$ be a gamma coloring of $G$. Then there exists a dominating set $D$ of $G$ such that $D$ is $\mathcal{C}$-colorful. Let $R$ be the minimal dominating set of $G$ contained in $D$. Then $R$ is maximal irredundant set of $G$ {(by Proposition \ref{intro})}. Since $R \subseteq D$, $R$ is $\mathcal{C}$-colorful. Thus $\mathcal{C}$ is an irredundance coloring of $G$. Hence $\chi_{i}(G) \leq \chi_{\gamma}(G)$. Also if $(V_1, V_2, \ldots, V_{\chi_d})$ is a dominator coloring of $G$, then the set $D=\{v_i: v_i \in V_i\}$ ($1 \leq i \leq \chi_d$) is a dominating set of $G$ (refer to \cite{IJ}). Thus by definition of gamma coloring, we have $\chi_{\gamma}(G) \leq \chi_{d}(G)$. Also $\chi_{d}(G) \leq \chi_{gd}(G)$ (refer to \cite{QR}). Thus we extend the chain of parameters involving coloring and domination as follows. For any graph $G$,
	$$\chi(G) \leq \chi_{i}(G) \leq \chi_{\gamma}(G) \leq \chi_{d}(G) \leq \chi_{gd}(G).$$
\end{ob}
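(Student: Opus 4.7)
The plan is to establish the four inequalities separately, working from left to right in the chain. The first inequality $\chi(G) \leq \chi_i(G)$ is immediate from the definition: every irredundance coloring is by definition a proper coloring, so the chromatic number cannot exceed the irredundance chromatic number. I would state this in a single sentence.

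For $\chi_i(G) \leq \chi_\gamma(G)$, I would start from any $\chi_\gamma$-coloring $\mathcal{C}$ of $G$ and exhibit a maximal irredundant set that is $\mathcal{C}$-colorful, which certifies that $\mathcal{C}$ is itself an irredundance coloring. By the definition of gamma coloring there is a dominating set $D$ whose vertices all receive distinct colors under $\mathcal{C}$. Inside $D$ pick any minimal dominating set $R$; since $R\subseteq D$, the set $R$ remains colorful. Proposition \ref{intro} then gives that $R$ is a maximal irredundant set, which is exactly what is required. So the same number of colors $\chi_\gamma(G)$ suffices for an irredundance coloring.

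For $\chi_\gamma(G) \leq \chi_d(G)$, I would take a $\chi_d$-coloring $(V_1,\dots,V_{\chi_d})$ and, as indicated in the statement (citing \cite{IJ}), choose one vertex $v_i \in V_i$ for each $i$. The resulting transversal $D=\{v_1,\dots,v_{\chi_d}\}$ is a dominating set (this uses the fact that in a dominator coloring every vertex dominates some class, and hence the transversal hits every vertex's closed neighborhood), and it is automatically colorful since its members come from distinct classes. Thus the dominator coloring is already a gamma coloring, giving the inequality. The final step $\chi_d(G) \leq \chi_{gd}(G)$ is immediate from the definitions, since every global dominator coloring is in particular a dominator coloring; I would just cite \cite{QR}.

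None of the steps presents a substantive obstacle: each reduces to peeling off a certifying set (a minimal dominating set inside $D$, or a colorful transversal of the classes) and invoking either Proposition \ref{intro} or a known containment from \cite{IJ,QR}. The only place where one needs to be slightly careful is the middle step, to make sure the shrinkage from $D$ to a minimal dominating set $R\subseteq D$ preserves the colorfulness property; this is automatic because colorfulness is inherited by subsets.
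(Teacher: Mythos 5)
Your proposal is correct and follows essentially the same route as the paper: the middle inequality is obtained by shrinking the colorful dominating set $D$ to a minimal dominating set $R\subseteq D$ and invoking Proposition \ref{intro}, and the outer inequalities are handled exactly as in the paper via the definitions and the citations to \cite{IJ} and \cite{QR}. The only difference is that you spell out slightly more explicitly why the transversal of the color classes dominates and why colorfulness passes to subsets, which the paper leaves implicit.
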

\vskip 5pt

\subsection{Irredundance compelling coloring}
We now proceed to define irredundance compelling coloring of graphs.
\vskip 5 pt

\begin{defn}
A coloring of $G$ in which every rainbow committee ($RC$) is an irredundant set is called as irredundance compelling coloring ($IRC$) of $G$. The { maximum} number of colors required among all the IRC-colorings of $G$ is called as the irredundance compelling chromatic number, and is denoted by $\chi_{irc}(G)$. A graph which admits $IRC$-coloring is said to $IRC$-colorable.
\end{defn}
\vskip 5 pt

\indent This coloring does not always exists in all the graphs as detailed in the following observation.
\vskip 5 pt

\begin{ob}
 	Let $K_n$ be the complete graph. Then $\chi_{irc}(K_n)$ does not exists since $V(K_n)$ is not an irredundant set of $K_n$.
 \end{ob}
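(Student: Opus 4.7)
The plan is to show that for $n \geq 2$ no proper coloring of $K_n$ can be an IRC-coloring, which forces $\chi_{irc}(K_n)$ to be undefined. The approach has two ingredients: first characterize what the rainbow committees of $K_n$ must look like under any proper coloring, and then verify that the only such candidate set fails the irredundance condition.

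First I would observe that because every two vertices of $K_n$ are adjacent, any proper coloring $\mathcal{C}$ of $K_n$ must assign pairwise distinct colors, so $\mathcal{C}$ uses exactly $n$ colors and every color class is a singleton. As a consequence, the unique rainbow committee with respect to $\mathcal{C}$ is the full vertex set $V(K_n)$ itself. Next I would verify that $V(K_n)$ is not irredundant whenever $n \geq 2$: fix any vertex $v \in V(K_n)$; since $n \geq 2$, the vertex $v$ is not isolated in $G[V(K_n)] = K_n$, and moreover $N[v] = V(K_n) = N[V(K_n) \setminus \{v\}]$ because every other vertex is adjacent to $v$. Hence $pn[v, V(K_n)] = N[v] \setminus N[V(K_n) \setminus \{v\}] = \emptyset$, so $v$ has no private neighbor and is not isolated in the induced subgraph, violating both clauses of irredundance.

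Combining the two steps, every proper coloring of $K_n$ (for $n \geq 2$) has $V(K_n)$ as its only rainbow committee and that set is not irredundant, so no proper coloring of $K_n$ qualifies as an IRC-coloring, and $\chi_{irc}(K_n)$ is undefined. The statement is essentially an unpacking of definitions, so there is no real technical obstacle; the only point to take care with is checking both clauses of the definition of irredundance (non-isolation in $G[S]$ and absence of a private neighbor in $V \setminus S$) for each vertex of $V(K_n)$.
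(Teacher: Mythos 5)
Your argument is correct and is exactly the reasoning the paper's one-line observation compresses: every proper coloring of $K_n$ uses $n$ singleton color classes, so the unique rainbow committee is $V(K_n)$, which fails irredundance since each vertex is non-isolated in the induced subgraph and has empty private neighbor set. Your explicit restriction to $n \geq 2$ is a sensible precaution the paper leaves implicit.
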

\vskip 5pt

\indent We now provide an application of $IRC$-coloring of graphs which will also serve as a motivation to study this parameter.
\vskip 5 pt

\indent Suppose we have a network of officers. This network can be represented by the graph $G$ with $V(G)$ being the set of officers and two vertices (officers) are adjacent if and only they know each other. Now there is a situation where each officer has a secret data that has to be secured. By the data being secured, we mean that if $S=\{F_1,F_2, \ldots, F_k\}$ be a group of officers. Suppose $F_i$ is not adjacent to any $F_j$, where $i\neq j$ and $1 \leq i,j \leq k$, there is no problem since the secret data of $F_i$ is not shared to any $F_j$. But consider the case when $F_i$ knows some $F_j$ in $S$. If $F_j$ is a friend of $F_i$, then there is no problem. But in case $F_j$ is an enemy of $F_i$, there is a possibility that the secret data of $F_i$ can be exposed in the set $S$. Therefore $F_i$ always makes sure that he has at least one close friend $CF$ in $V(G)-S$ such that $CF$ does not know anyone in $S-\{F_i\}$ so that $F_i$ can send the data to $CF$ and protect it from the set $S$.

Now we shall partition $G$ into independent sets $V_1,V_2, \ldots ,V_s$. Since any two officers in $V_l$ ($1 \leq l \leq s$) do not know each other,  their secret data is secured. Suppose there comes a situation where a group of officers $D$ has to be formed such that $D=\{F_l : F_l \in V_l\}$, where officer $F_l$ serves has a representative of $V_l$. This raises the following question.
\begin{quest} \label{qq1}
	Can $V(G)$ be partitioned into independent sets $V_1,V_2, \ldots ,V_s$ such that every officer $F_l$ keeps his data secured in any set $D=\{F_l : F_l \in V_l\}$.
\end{quest}
\vskip 5 pt

\noindent The answer to the Question \ref{qq1} is YES if and only if the network $G$ is $IRC$-colorable.
\vskip 5 pt

\indent Now the next question that arises is that whether there is a practical scenario in real world where the situation of Question \ref{qq1} arises. So we now give a practical scenario. Consider the sets $V_1,V_2, \ldots ,V_s$, where each $V_l$ represents a group of special agents(having a secret data) of a particular country. Note that any two agents of the same country do not know each other since they belong to different organizations and work independently. But two agents from different countries can have a connection. Suppose every year, a group $D$ of agents of different countries is selected to meet and discuss over some issues (selection of agents is random). Then every agent in the set $D$ secures his data if and only if the network of agents is $IRC$-colorable.

\section{On irredundance chromatic number of graphs}\label{ircoloring}

In this section, we present some results of our study on the irredundance chromatic number of graphs. We begin with a condition that implies $\chi_{i}(G) = \chi(G)$.
\vskip 5 pt

\begin{prop}\label{irp1}
	If $G$ contains a full degree vertex, then $\chi_{i}(G)=\chi(G)$.
\end{prop}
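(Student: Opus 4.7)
The plan is to exhibit a singleton maximal irredundant set, after which any proper $\chi(G)$-coloring will automatically be an irredundance coloring.

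First, let $v$ be a full degree vertex of $G$, i.e.\ a vertex with $N[v] = V(G)$. I would verify that $R = \{v\}$ is itself a maximal irredundant set. For irredundance, note $pn[v, R] = N[v]$ is non-empty, so $\{v\}$ is irredundant (equivalently, $v$ is isolated in $G[R]$). For maximality, pick any $u \in V(G) - \{v\}$ and examine $R' = \{v, u\}$. Since $v$ is adjacent to every vertex, we have $N[u] \subseteq N[v]$, and therefore $pn[u, R'] = N[u] - N[v] = \emptyset$. Moreover $u$ is adjacent to $v$, so $u$ is not isolated in $G[R']$. Hence $u$ has no private neighbour with respect to $R'$, so $R'$ is not irredundant. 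This holds for every choice of $u$, so $\{v\}$ is indeed maximal irredundant.

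Next, take any proper coloring $\mathcal{C}$ of $G$ using $\chi(G)$ colors. The singleton $R = \{v\}$ is vacuously $\mathcal{C}$-colorful (only one vertex, only one color), so by Definition~\ref{def1}, $\mathcal{C}$ is already an irredundance coloring of $G$. This gives $\chi_i(G) \leq \chi(G)$. The reverse inequality $\chi(G) \leq \chi_i(G)$ holds for every graph since an irredundance coloring is, by definition, a proper coloring. Combining the two bounds yields $\chi_i(G) = \chi(G)$.

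There is no real obstacle here; the only subtlety worth being careful about is the maximality argument, which relies crucially on $v$ being full degree so that $N[u] \subseteq N[v]$ forces $u$'s private neighbour set to be empty. Everything else is a one-line observation.
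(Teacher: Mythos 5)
Your proof is correct and follows essentially the same route as the paper: show $\{v\}$ is a maximal irredundant set (every candidate $u$ has $N[u]\subseteq N[v]$, hence no private neighbour in $\{v,u\}$), and then note that a singleton is colorful in any $\chi$-coloring. Your version is in fact slightly more careful than the paper's, which only discusses the neighbours of $w$ rather than computing $pn[w,\{v,w\}]$ directly, but the argument is the same.
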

\begin{proof}
	Let $v$ be the full degree vertex of $G$ and $\mathcal{C}$ be any $\chi$-coloring of $G$. We shall prove that $\{v\}$ is maximal irredundant set of $G$. Suppose not, then there is some vertex $w$ such that $\{v,w\}$ is irredundant set of $G$. But $w$ is adjacent to $v$ and every neighbor of $w$ (if $deg(w)\geq 2$) is also adjacent to $v$. This implies that $w$ has no private neighbor with respect to the set $\{v,w\}$, which is a contradiction. Thus $\{v\}$ is maximal irredundant set of $G$ and $\mathcal{C}$-colorful. Hence $\chi_{i}(G)=\chi(G)$. \qed
\end{proof}
\vskip 5 pt

\noindent Proposition \ref{irp1} implies the following corollaries.
\vskip 5 pt

\begin{cor}
	Let $K_n$ be a complete graph of order $n$. Then $\chi_{i}(K_n)=n$.
\end{cor}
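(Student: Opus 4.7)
The plan is to apply Proposition \ref{irp1} essentially verbatim. First I would observe that every vertex of $K_n$ is a full degree vertex, since each vertex is adjacent to all other $n-1$ vertices. Thus the hypothesis of Proposition \ref{irp1} is satisfied, and we may conclude that $\chi_{i}(K_n) = \chi(K_n)$.

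Second, I would recall (or quote) the classical fact that $\chi(K_n) = n$, which follows immediately because every pair of vertices is adjacent, so a proper coloring must assign pairwise distinct colors. Combining these two equalities gives $\chi_{i}(K_n) = n$, as required.

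There is no real obstacle here; the content of the corollary is entirely absorbed by Proposition \ref{irp1}, and the only step beyond quoting that proposition is recognizing that every vertex of $K_n$ has full degree. If desired, one could sanity check the irredundance condition directly: for any single vertex $v$ of $K_n$, the set $\{v\}$ is trivially irredundant (with $v$ itself as a private neighbor), and adding any other vertex $w$ destroys irredundance since $v$ and $w$ share all remaining vertices as common neighbors, leaving $w$ with no private neighbor. Hence $\{v\}$ is a maximal irredundant set that is colorful in the unique proper $n$-coloring of $K_n$, confirming the bound.
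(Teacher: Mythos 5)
Your proposal is correct and matches the paper's intent exactly: the corollary is stated as an immediate consequence of Proposition \ref{irp1}, and your argument (every vertex of $K_n$ has full degree, hence $\chi_{i}(K_n)=\chi(K_n)=n$) is precisely that deduction. The extra sanity check that $\{v\}$ is a maximal irredundant set is harmless and consistent with the proof of Proposition \ref{irp1} itself.
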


\begin{cor}
	Let $K_{1,n-1}$ be a star of order $n$. Then $\chi_{i}(K_{1,n-1})=2$.
\end{cor}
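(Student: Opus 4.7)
The plan is to observe that this corollary follows essentially immediately from Proposition \ref{irp1}, since the star $K_{1,n-1}$ has a natural full-degree vertex, and then to compute $\chi(K_{1,n-1})$ directly.

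First I would identify the center vertex $c$ of $K_{1,n-1}$ (assuming $n\geq 2$, since otherwise the statement is vacuous for a star). By definition of the star, $c$ is adjacent to every leaf, so $\deg(c) = n-1$, meaning $c$ is a full-degree vertex of $K_{1,n-1}$. This is exactly the hypothesis of Proposition \ref{irp1}, so I may invoke that proposition to conclude
\[
\chi_{i}(K_{1,n-1}) = \chi(K_{1,n-1}).
\]

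Next I would compute $\chi(K_{1,n-1})$. Since $K_{1,n-1}$ contains at least one edge (for $n\geq 2$), we have $\chi(K_{1,n-1})\geq 2$. On the other hand, the partition of $V(K_{1,n-1})$ into $\{c\}$ and the set of leaves gives a proper $2$-coloring, so $\chi(K_{1,n-1})\leq 2$. Combining these gives $\chi(K_{1,n-1})=2$, and therefore $\chi_{i}(K_{1,n-1}) = 2$.

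There is essentially no obstacle here; the only thing one must be careful about is the small case $n=2$, where $K_{1,1}$ is just an edge and both endpoints are full-degree, but the same argument still applies. So this corollary is really just a direct specialization of Proposition \ref{irp1} together with the elementary fact that a star is bipartite and has at least one edge.
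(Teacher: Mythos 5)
Your proposal is correct and matches the paper's intended argument exactly: the paper derives this corollary directly from Proposition \ref{irp1} by noting that the center of the star is a full-degree vertex, so $\chi_{i}(K_{1,n-1})=\chi(K_{1,n-1})=2$. Your additional care with the case $n=2$ is fine but changes nothing.
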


\begin{rmk}
	The converse of Proposition \ref{irp1} may not be true. Refer Example \ref{exm1} for a counter example.
\end{rmk}
\vskip 5pt

\indent In the following, we establish upper and lower bounds of $\chi_{i}(G)$ in terms of $\chi(G)$ and $ir(G)$, Then, the study of extremal graphs satisfying these bounds are presented.
\vskip 5 pt

\begin{them}\label{thmchi}
For any graph $G$, we have $$max\{\chi(G), ir(G)\} \leq \chi_{i}(G) \leq \chi(G)+ir(G)-1.$$
The bounds are sharp.
\end{them}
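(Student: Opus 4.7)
The plan is to establish the two inequalities independently and then exhibit a sharpness example. For the lower bound $\max\{\chi(G), ir(G)\} \leq \chi_i(G)$, I would argue the two parts separately. The inequality $\chi(G) \leq \chi_i(G)$ is immediate: by Definition \ref{def1}, every irredundance coloring is a proper coloring, hence uses at least $\chi(G)$ colors. For $ir(G) \leq \chi_i(G)$, fix an irredundance coloring $\mathcal{C}$ of $G$ with $\chi_i(G)$ colors together with a maximal irredundant set $R$ that is $\mathcal{C}$-colorful. Since the vertices of $R$ carry pairwise distinct colors, $|R| \leq \chi_i(G)$; since $R$ is a maximal irredundant set of $G$, the definition of $ir(G)$ yields $|R| \geq ir(G)$. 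Chaining these inequalities gives $ir(G) \leq \chi_i(G)$.

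For the upper bound $\chi_i(G) \leq \chi(G) + ir(G) - 1$, I would construct an explicit irredundance coloring with the desired number of colors. Start with any $\chi$-coloring $\mathcal{C} = (V_1, \ldots, V_{\chi(G)})$ of $G$ and any maximal irredundant set $R = \{v_1, v_2, \ldots, v_{ir(G)}\}$ of minimum cardinality $ir(G)$. Keep the color of $v_1$ as in $\mathcal{C}$, and for each $j \in \{2, \ldots, ir(G)\}$ recolor $v_j$ with a new color $c_j$ that is absent from $\mathcal{C}$, taking the $c_j$'s pairwise distinct. This adds exactly $ir(G) - 1$ fresh colors, so the total number of colors used is at most $\chi(G) + ir(G) - 1$. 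The new coloring remains proper because each $c_j$ is unique to $v_j$, so no neighbor of $v_j$ shares the color $c_j$, while every vertex outside $\{v_2, \ldots, v_{ir(G)}\}$ keeps a color from the proper coloring $\mathcal{C}$. Finally, $R$ is colorful in the new coloring: $v_1$ keeps its original color (which differs from every fresh $c_j$) and $v_2, \ldots, v_{ir(G)}$ receive pairwise distinct fresh colors. Hence $\chi_i(G) \leq \chi(G) + ir(G) - 1$.

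For sharpness, the complete graph $K_n$ realizes both bounds simultaneously: $\chi(K_n) = n$, $ir(K_n) = 1$ (a single vertex is a maximal irredundant set of $K_n$, since adding any other vertex removes its sole private neighbor), and Proposition \ref{irp1} gives $\chi_i(K_n) = n$, so $\max\{\chi(K_n), ir(K_n)\} = n = \chi_i(K_n)$ and $\chi(K_n) + ir(K_n) - 1 = n = \chi_i(K_n)$. Conceptually the argument is light; the main obstacle is the bookkeeping in the upper-bound construction, where one must verify that using fresh colors simultaneously preserves properness of the coloring and makes $R$ colorful. Both properties follow at once from the choice of the $c_j$'s as new (avoiding all conflicts with neighbors in $G$) and pairwise distinct (ensuring the recolored vertices of $R$ differ from one another and from $v_1$).
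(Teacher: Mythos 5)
Your proof is correct and follows essentially the same route as the paper: the lower bound via the definition of $\chi_i$ and the minimality of $ir(G)$ over maximal irredundant sets, and the upper bound via recoloring all but one vertex of a minimum maximal irredundant set with fresh colors. The only cosmetic difference is that you use $K_n$ (where $ir=1$ makes both bounds coincide) as a single sharpness example, while the paper uses $K_{n,m}$ for the lower bound and $K_n$ for the upper; both choices are valid.
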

\begin{proof}
	By the definition of $\chi_{i}(G)$, it is clear that $\chi_{i}(G) \geq \chi(G)$. Suppose $\chi_{i}(G) < ir(G)$, then there exists a maximal irredundant set $R$ such that $|R| < ir(G) $, which is a contradiction. Thus $\chi_{i}(G) \geq ir(G)$ and hence the lower bound follows. Now for the upper bound, consider the $\chi$-coloring $\mathcal{C}$ of $G$. Let $R$ be a maximal irredundant set of minimum cardinality (i.e. $|R|=ir(G)$). Now the new coloring $\mathcal{C^\prime}$ is the modification of the coloring $\mathcal{C}$ such that exactly $ir(G)-1$ number of vertices in $R$ are given new colors (other than the colors used in $\mathcal{C}$). In the coloring $\mathcal{C^\prime}$, $R$ is $\mathcal{C^\prime}$-colorful. Thus $\chi_{i}(G) \leq \chi(G)+ir(G)-1$. Some trivial examples of extremal graphs of is a complete bipartite graph $K_{n, m}$ when the graph achieves the lower bound while is a complete graphs $K_{n}$ when the graph achieves the upper bound. \qed
\end{proof}
\vskip 5 pt

\indent We further study non-trivial extremal graphs satisfying the bounds in Theorem \ref{thmchi}. We can show that there infinitely many graphs $G$ achieving the lower bound.
\vskip 5 pt

\noindent For $n, k \in \mathbb{N}$ such that $n \geq 2k$, we let
\vskip 5 pt

\indent $\mathcal{A}(n, k)$ be the family of the graphs $G$ order $n$ such that $\chi(G)=ir(G)=\chi_{i}(G)=k.$
\vskip 5 pt

\noindent Clearly, all graphs $G$ in the family $\mathcal{A}(n, k)$ meets the lower bound when $ir(G) = \chi(G) = k$. The following lemma shows that this family is non-empty.
\vskip 5 pt

\begin{lem}\label{ll1}
For $n, k \in \mathbb{N}$ such that $n \geq 2k$, we have that $\mathcal{A}(n, k) \neq \emptyset$.
\end{lem}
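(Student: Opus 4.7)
The plan is to exhibit an explicit family of graphs witnessing $\mathcal{A}(n,k)\neq\emptyset$ for every $k\geq 2$ and $n\geq 2k$. Starting from the complete graph $K_k$ on vertices $v_1,\ldots,v_k$, I would attach $m_i\geq 1$ pendant vertices $u_i^1,\ldots,u_i^{m_i}$ to each $v_i$, where the integers $m_1,\ldots,m_k$ satisfy $m_1+\cdots+m_k=n-k$; such a distribution exists because $n-k\geq k$ (for instance take $m_1=n-2k+1$ and $m_2=\cdots=m_k=1$). The resulting graph $G$ is connected, has order $n$, and the goal is to verify $\chi(G)=ir(G)=\chi_i(G)=k$.

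The equality $\chi(G)=k$ is immediate: the subgraph $K_k$ forces $\chi(G)\geq k$, while the coloring that assigns color $i$ to $v_i$ and any color different from $i$ to each pendant $u_i^j$ is a proper $k$-coloring of $G$.

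The main work lies in showing $ir(G)=k$. For the upper bound, the set $R_0=\{v_1,\ldots,v_k\}$ is irredundant because $pn[v_i,R_0]\supseteq\{u_i^1,\ldots,u_i^{m_i}\}$, and it is maximal because $R_0$ dominates $V(G)$, so every $u_i^j\notin R_0$ satisfies $pn[u_i^j,R_0\cup\{u_i^j\}]=N[u_i^j]-N[R_0]=\emptyset$. Hence $ir(G)\leq k$. The main obstacle is the reverse inequality $ir(G)\geq k$, which I would handle by contradiction. Suppose $R$ is a maximal irredundant set with $|R|<k$. Partition the vertex set into groups $A_i=\{v_i,u_i^1,\ldots,u_i^{m_i}\}$ for $i=1,\ldots,k$; pigeonhole forces some $A_{i_0}$ to be disjoint from $R$. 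Then $u_{i_0}^1\notin N[R]$, so in $R':=R\cup\{u_{i_0}^1\}$ the vertex $u_{i_0}^1$ is its own private neighbor. The only way $R'$ can fail to be irredundant is if some $v\in R$ has $pn[v,R]\subseteq N[u_{i_0}^1]=\{v_{i_0},u_{i_0}^1\}$; since $u_{i_0}^1\notin N[R]$, this collapses to $pn[v,R]=\{v_{i_0}\}$. A short case analysis rules this out: for $v_{i_0}$ to lie in $pn[v,R]$ one must have $v=v_j$ for some $j\neq i_0$ (no pendant outside $A_{i_0}$ is adjacent to $v_{i_0}$), and then $pn[v_j,R]=\{v_{i_0}\}$ forces $R$ to contain $v_j$, every pendant $u_j^1,\ldots,u_j^{m_j}$, and at least one pendant from each $A_l$ with $l\notin\{j,i_0\}$, yielding $|R|\geq 1+m_j+(k-2)\geq k$, contradicting $|R|<k$. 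Hence $R'$ is irredundant, contradicting the maximality of $R$, and $ir(G)\geq k$ follows.

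Finally, $\chi_i(G)=k$ drops out: the $k$-coloring described above has $R_0$ colorful, so $\chi_i(G)\leq k$, and Theorem~\ref{thmchi} gives $\chi_i(G)\geq\max\{\chi(G),ir(G)\}=k$. Therefore $G\in\mathcal{A}(n,k)$, establishing $\mathcal{A}(n,k)\neq\emptyset$.
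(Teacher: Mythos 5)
Your construction coincides with the paper's: the authors take the corona $K_k \circ K_1$ with the remaining $n-2k$ pendants attached to a single vertex of $K_k$, which is exactly your instance $m_1 = n-2k+1$, $m_2=\cdots=m_k=1$, and they draw the same conclusions $\chi=ir=\chi_i=k$. Your argument is correct and follows essentially the same route; the only difference is that you supply a full verification of $ir(G)\geq k$ (via the pigeonhole on the groups $A_i$ and the private-neighbor case analysis), a step the paper merely asserts.
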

\begin{proof} For a given $k \in \mathbb{N}$, we let $K_k \circ K_1$ be the corona of the complete graph $K_k$ and we let $v \in V(K_k)$. Then, for $n \in \mathcal{N}$ in which $n \geq 2k$, we let $G(n, k)$ be obtained from $K_k \circ K_1$ and the vertices $u_1, u_2, \ldots, u_{2k-n}$ by joining $u_1, u_2, \ldots, u_{2k-n}$ to $v$. It is clear that $\chi(G(n, k))=k$. Also $V(K_k)$ is a maximal irredundant set of $G(n, k)$ of minumum cardinality, hence $ir(G(n, k))=k$. In any $\chi$-coloring of $G(n, k)$, $V(K_k)$ is colorful and hence $\chi_{i}(G(n, k)) = k$. Thus, $G(n, k) \in \mathcal{A}(n, k)$, establishing the proof.\qed
\end{proof}
\vskip 5 pt
\indent We now give a construction of graphs attaining the upper bound of Theorem \ref{thmchi}, i.e. the graphs $G$ with $\chi_{i}(G)=\chi(G)+ir(G)-1$.
\vskip 5 pt

\noindent For integers $k$ and $l$ such that $k \geq 3$, we let
\vskip 5 pt

\indent $\mathcal{Z}(k, l)$ be the family of the graphs $G$ such that $\chi(G)=k$, $ir(G)=l$ and $\chi_{i}(G)=k+l-1$.
\vskip 5 pt

\begin{figure}[h!]
	\begin{center}
		
		\begin{tikzpicture}[line cap=round,line join=round,>=triangle 45,x=1.5cm,y=1.5cm]
			\clip(-3.35,-3.19) rectangle (5,4);
			\draw [line width=0.5pt] (0,3)-- (0,0);
			\draw [line width=0.5pt] (0,3)-- (3,3);
			\draw [line width=0.5pt] (3,3)-- (3,0);
			\draw [line width=0.5pt] (3,0)-- (0,0);
			\draw [line width=0.5pt] (0,0)-- (3,3);
			\draw [line width=0.5pt] (1.45,0.79)-- (0,0);
			\draw [line width=0.5pt] (1.45,0.79)-- (3,0);
			\draw [line width=0.5pt] (0,0)-- (-3,0);
			\draw [line width=0.5pt] (0,0)-- (-2.83,-1.05);
			\draw [line width=0.5pt] (0,0)-- (0,-3);
			\begin{scriptsize}
				\draw [fill=black] (0,0) circle (2pt);
				\draw[color=black] (0.17,0.3) node {\large{$v$}};
				\draw [fill=black] (0,3) circle (2pt);
				\draw[color=black] (0.17,3.32) node {\large{$a$}};
				\draw [fill=black] (3,3) circle (8pt);
				\draw[color=black] (3.17,3.42) node {\large{$K_{k-2}$}};
				\draw [fill=black] (3,0) circle (2pt);
				\draw[color=black] (3.17,0.32) node {\large{$u$}};
				\draw [fill=black] (-3,0) circle (2pt);
				\draw[color=black] (-2.77,0.38) node {\large{$p_1$}};
				\draw [fill=black] (-2.83,-1.05) circle (2pt);
				\draw[color=black] (-2.61,-0.66) node {\large{$p_2$}};
				\draw [fill=black] (0,-3) circle (2pt);
				\draw[color=black] (0.55,-2.62) node {\large{$p_{k+l-2}$}};
				\draw [fill=black] (-2.05,-1.87) circle (0.7pt);
				\draw [fill=black] (-1.69,-2.23) circle (0.7pt);
				\draw [fill=black] (-1.37,-2.47) circle (0.7pt);
				\draw [fill=black] (1.45,0.79) circle (2pt);
				\draw[color=black] (1.61,1.1) node {\large{$b$}};
			\end{scriptsize}
		\end{tikzpicture}
		\caption{The graph $H$}\label{H}
	\end{center}
\end{figure}
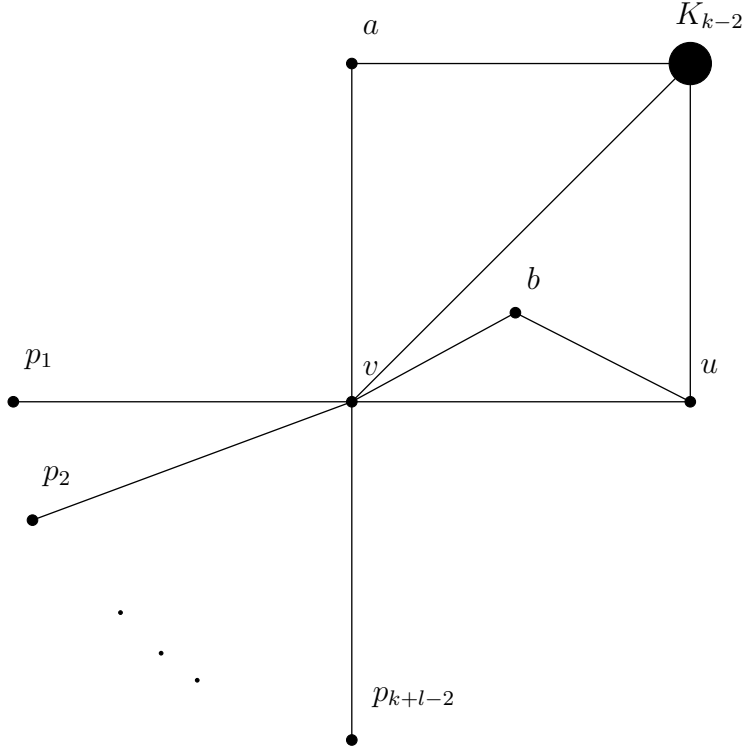


\indent We will describe a graph operation that we will be used in the next result. Let $H$ be an induced subgraph of a graph $G$. By the identification (or merging) of $H$ on two copies of $G$, we mean that the vertex $v \in V(H)$ of first copy of $G$ is identified with the vertex $v \in V(H)$ of second copy of $G$ to get new graph which we denote by ${[G]}_{H}^{2}$. Note that if $v_1v_2$ is an edge in $H$, then there will be two edges between $v_1$ and $v_2$ in ${[G]}_{H}^{2}$. Since an extra edge is of no use to us, so we remove all the edges except one between $v_1$ and $v_2$ in ${[G]}_{H}^{2}$. The same process is involved for constructing ${[G]}_{H}^{n}$, where $H$ is merged over $n$-copies of $G$.

\begin{them}
	For every positive integers $k \geq 3$ and $l$, $\mathcal{Z}(k, l) \neq \emptyset$.
\end{them}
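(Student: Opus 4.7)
The plan is to construct a graph in $\mathcal{Z}(k,l)$ by combining the graph $H$ of Figure~\ref{H} with the merging operation $[\cdot]_{\cdot}^{\cdot}$ just introduced. First I would observe that the case $l=1$ is already settled by $H$ itself: the clique $\{v,a\}\cup V(K_{k-2})$ shows $\chi(H)\geq k$, an explicit $k$-coloring yields equality, the vertex $v$ is adjacent to every other vertex of $H$ so $\{v\}$ is a maximal irredundant set and $ir(H)=1$, and Proposition~\ref{irp1} then gives $\chi_i(H)=k=k+1-1$.

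For $l\geq 2$ I would take $G:=[H]_{H_0}^{l}$, where $H_0$ is a carefully chosen induced subgraph of $H$; a natural candidate is the $K_{k-1}$ induced by $\{u\}\cup V(K_{k-2})$, which makes the $l$ copies share this clique while each retains its own $v_i,a_i,b_i$ and pendants $p_j^{(i)}$. The verification would proceed in three steps. For (a) $\chi(G)=k$: a $K_k$ survives inside any single copy (e.g.\ $\{v_i,a_i\}\cup V(K_{k-2})$), while a consistent $k$-coloring can be assembled by fixing the colors on the shared $K_{k-1}$ and coloring each copy's remaining vertices independently. For (b) $ir(G)=l$: the transversal $\{v_1,\ldots,v_l\}$ is maximal irredundant (each $v_i$ retains any pendant $p_j^{(i)}$ as a private neighbor, and the closed neighborhood of the transversal already equals $V(G)$), and a pigeonhole extension argument shows that any irredundant set of size less than $l$ misses some copy's appendage entirely and can therefore be augmented by that copy's $v_i$. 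For (c) $\chi_i(G)=k+l-1$: the upper bound is immediate from Theorem~\ref{thmchi}, and the matching lower bound is the crux.

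The main obstacle is the lower bound $\chi_i(G)\geq k+l-1$ in step (c). For this I would establish a color-rigidity lemma: in every proper coloring of $G$ with at most $k+l-2$ colors, each minimum maximal irredundant set $R$ must contain two vertices sharing a color, so $R$ cannot be rainbow. The natural way to prove such a lemma is to argue that, once the colors on the shared $K_{k-1}$ are fixed, the local color constraints inside each copy pin the appendage representative (e.g.\ the corresponding $v_i$) into the single residual color class, so any transversal $\{v_1,\ldots,v_l\}$ ends up monochromatic, and then to rule out, or verify the same rigidity for, every alternative transversal mixing the vertex-types $v_i,a_i,b_i,p_j^{(i)}$ across copies. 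Combined with a structural classification of the minimum maximal irredundant sets of $G$, this forces at least $l-1$ fresh colors to be introduced in any irredundance coloring, giving $\chi_i(G)\geq k+l-1$. I expect this rigidity-plus-classification argument, and in particular the case analysis needed to eliminate the exotic mixed transversals, to absorb the bulk of the work.
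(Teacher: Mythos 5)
Your construction coincides with the paper's: $l$ copies of the graph $H$ of Figure \ref{H} merged along the clique on $\{u\}\cup V(K_{k-2})$, and your steps (a) and (b) follow the same lines as the paper's proof. One caution on (b): the paper's Claim \ref{ube11} augments a deficient irredundant set by the \emph{pendants} of the missed copy, not by that copy's $v^i$. Adding $v^i$ inserts all of $N[v^i]\supseteq \{u\}\cup V(K_{k-2})\cup\{a^i,b^i\}$ into the dominated region and can therefore destroy the private neighbors of vertices of $R$ sitting in or near the shared clique; pendants have closed neighborhood $\{p_j^i,v^i\}$, which reduces the problem to the single case where $v^i$ is someone's sole external private neighbor, and that is exactly the case analysis the paper carries out. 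Your augmentation step would need the same repair.

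The genuine gap is in step (c). You restrict both the rigidity lemma and the structural classification to \emph{minimum} maximal irredundant sets, but an irredundance coloring only requires \emph{some} maximal irredundant set to be colorful, and one cannot pass to a minimum one: a proper subset of a maximal irredundant set is never itself maximal (adding back any deleted vertex returns a subset of the original irredundant set, which is still irredundant). So proving that no size-$l$ maximal irredundant set is rainbow under $k+l-2$ colors does not give $\chi_i(G)\geq k+l-1$. The paper's Claim \ref{ube11} classifies \emph{all} maximal irredundant sets: for each $i$, such a set contains either $v^i$ or the entire pendant set $\{p_1^i,\dots,p_{k+l-2}^i\}$. The second, non-minimum family is what your plan omits, and it is dispatched by a direct count rather than by rigidity: the $k+l-2$ pendants of one copy are independent and share the neighbor $v^i$, so any coloring in which such a set is colorful already uses at least $k+l-1$ colors. (A smaller imprecision: your claim that the shared $K_{k-1}$ "pins each $v_i$ into the single residual color class" holds only when exactly $k$ colors are used; with $k+l-2$ colors there are $l-1$ residual classes, and you need the pigeonhole form of the statement, which does still defeat the transversal $\{v^1,\dots,v^l\}$.)
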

\begin{proof}
In the proof, it suffices to show that, for every positive integers $k \geq 3$ and $l$, there exists a graph $G$ with $\chi(G)=k$, $ir(G)=l$ and $\chi_{i}(G)=k+l-1$. Let $H$ be the graph as shown in Figure \ref{H} with $V(H)=\{v, u, a, b, p_j: 1 \leq j \leq k+l-2\} \cup V(K_{k-2})$ and the edge set is described in Figure \ref{H} (note that every vertex of $V(K_{k-2})$ is joined to $a, v$ and $u$). Let $S=V(K_{k-2}) \cup \{u\} \subset V(H)$. Let $G={[H]}_S^{l}$ and for $1 \leq i \leq l$, let $v^i, u^i,a^i,b^i, p_{j}^{i}$ represent the vertices in the $i^{th}$-copy the $H$. The graph $[H]_S^2$ is shown in Figure \ref{ube}. We shall prove that $G$ is the required graph. We shall prove that $\chi(G)=k$. Clearly $\chi(G)\geq k$ since the induced graph $G[V(K_{k-2}) \cup \{v,a\}]$ is a clique of order $k$. Let $V(K_{k-2})=\{v_1, v_2, \ldots, v_{k-2}\}$. The coloring $\mathcal{C}=(\{v_1,p_{j}^{i}: 1 \leq j \leq k+l-2\}, \{v_2, b^i\}, \{v_3\}, \{v_4\}, \ldots, \{v_{k-2}\} \{a^i,u^i: 1 \leq i \leq l\}, \{v^i: 1 \leq i \leq l\})$ is a proper coloring of $G$. Thus $\chi(G)\leq k$. \\
	We shall prove the following claim which will help us to prove that $ir(G)=l$.
	\begin{cl}\label{ube11}
		Any maximal irredundant set of $G$ either contains $v^i$ or $\{p_j^i: 1 \leq j \leq k+l-2\}$, for all $1 \leq i \leq l$.
	\end{cl}
	It is very clear that if some maximal irredundant set of $G$ contains a pendant attached to $v^i$, then it contains all the pendants attached to $v^i$. Suppose there exists a maximal irredundant set $R$ of $G$ such that $v^i \notin R$ and $p_j^i \notin R$, for some $i$, say for $i=1$. If $v^1$ is not the private neighbor of any vertex of $R$, then $R \cup \{p_1^1, p_2^1, \ldots, p_{k+l-2}^1\}$ is a irredundant set of $G$, contradicting the maximality of $R$. Now suppose $v^1$ is the only external private neighbor ($epn$) of some vertex of $R$. If $v^1$ is the only $epn$ of $v_1 \in V(K_{k-2})$. Then the vertices $a^1, b^1$ and $u^1$ does not belong to $R$ (since $R$ is irredundant). This implies that $v_1$ has $epn$ $a^1$ (which is other than $v^1$) with respect to $R$. Suppose that $v^1$ is the only $epn$ of $a^1 \in R$. Then some vertex of $V_{k-2}$ has to belong to $R$, say $v_1$. But $a^1$ has no private neighbor with respect to $R$, contradicting that $R$ is irredundant set of $G$. Similarly $v^1$ cannot be the only $epn$ of $b^1$ or $u^1$. Thus $R \cup \{p_1^1, p_2^1, \ldots, p_{k+l-2}^1\}$ is a irredundant set of $G$, contradicting the maximality of $R$. Thus every maximal irredudant set of $G$ either contains $v^i$ or $\{p_j^i: 1 \leq j \leq k+l-2\}$, for all $1 \leq i \leq l$. Thus the claim is proved.
	\vskip 5pt
	
	From Claim \ref{ube11}, it is clear if $R$ is an $ir$-set of $G$, then $|R| \geq l$. Thus  $ir(G) \geq l$. Let $S=\{v^i: 1 \leq i \leq l\}$. Clearly $S$ is an $\gamma$-set of $G$ and hence a maximal irredundant set of $G$. Thus $ir(G) \leq l$. Now we shall proceed to prove that $\chi_{i}(G)=k+l-1$. Now the new coloring $\mathcal{C}^{\prime}=(\mathcal{C} \cup \{v^i\})$ (for $2 \leq i \leq l$) is an irredundance coloring of $G$ in which an $ir$-set $S$ is colorful. Thus $\chi_{i}(G) \leq k+l-1$. By Claim \ref{ube11}, if $R \neq S$ is a maximal irredundant set of $G$, then for some $i$, say $i=1$, the set $R_1=\{p_1^1, p_2^1, \ldots , p_{k+l-2}^1\}$ is contained in $R$. Since at least $k$ colors are used to color $G$, one of the $k-1$  colors used may be used to color some vertices in $R_1$ (since the color used for the vertex adjacent to the pendents cannot be used to color the pendents). But even after that since $R$ has to be colorful in an irredundance coloring, at least $l-1$ vertices in $R_1$ have to receive new colors and thus $\chi_{i}(G) \geq k+l-1$. \qed
\end{proof}
\vskip 15 pt

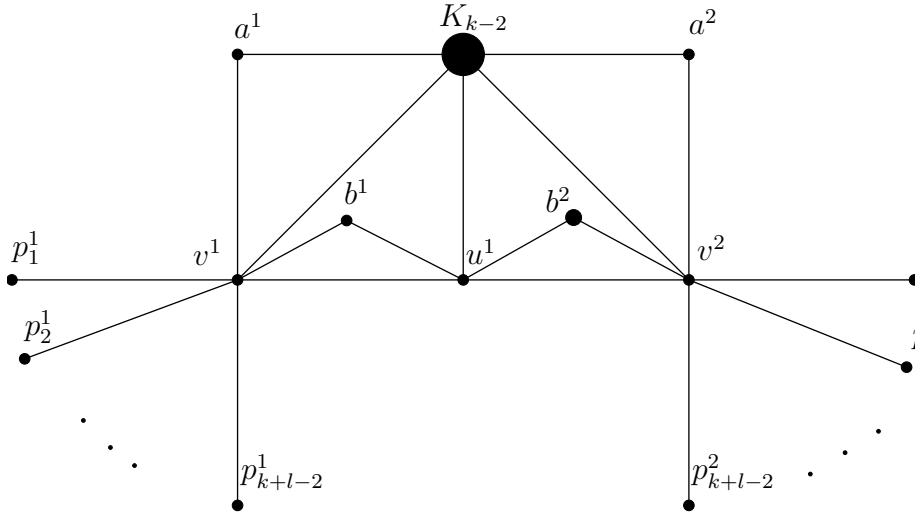
\begin{figure}[h!]
	\begin{center}
		
		\begin{tikzpicture}[line cap=round,line join=round,>=triangle 45,x=1cm,y=1cm]
			\clip(-3.06273751259392,-3.560728486345449) rectangle (9,4);
			\draw [line width=0.5pt] (0,3)-- (0,0);
			\draw [line width=0.5pt] (0,3)-- (3,3);
			\draw [line width=0.5pt] (3,3)-- (3,0);
			\draw [line width=0.5pt] (3,0)-- (0,0);
			\draw [line width=0.5pt] (0,0)-- (3,3);
			\draw [line width=0.5pt] (1.45,0.79)-- (0,0);
			\draw [line width=0.5pt] (1.45,0.79)-- (3,0);
			\draw [line width=0.5pt] (0,0)-- (-3,0);
			\draw [line width=0.5pt] (0,0)-- (-2.83,-1.05);
			\draw [line width=0.5pt] (0,0)-- (0,-3);
			\draw [line width=0.5pt] (3,3)-- (6,3);
			\draw [line width=0.5pt] (6,3)-- (6,0);
			\draw [line width=0.5pt] (6,0)-- (3,0);
			\draw [line width=0.5pt] (4.467633708795116,0.8299410626889091)-- (3,0);
			\draw [line width=0.5pt] (4.467633708795116,0.8299410626889091)-- (6,0);
			\draw [line width=0.5pt] (6,0)-- (6,-3);
			\draw [line width=0.5pt] (6,0)-- (8.894067432756044,-1.1610652709802607);
			\draw [line width=0.5pt] (6,0)-- (9,0);
			\draw [line width=0.5pt] (3,3)-- (6,0);
			\begin{scriptsize}
				\draw [fill=black] (0,0) circle (2pt);
				\draw[color=black] (-0.4,0.37107790040406827) node {\large{$v^1$}};
				\draw [fill=black] (0,3) circle (2pt);
				\draw[color=black] (0.14786103842360682,3.3809179277025327) node {\large{$a^1$}};
				\draw [fill=black] (3,3) circle (8pt);
				\draw[color=black] (3.134370538927364,3.48) node {\large{$K_{k-2}$}};
				\draw [fill=black] (3,0) circle (2pt);
				\draw[color=black] (3.234370538927364,0.37663158493387494) node {\large{$u^1$}};
				\draw [fill=black] (-3,0) circle (2pt);
				\draw[color=black] (-2.8030947775503443,0.4299621117285848) node {\large{$p_1^1$}};
				\draw [fill=black] (-2.83,-1.05) circle (2pt);
				\draw[color=black] (-2.6431031971662144,-0.61887158190071) node {\large{$p_2^1$}};
				\draw [fill=black] (0,-3) circle (2pt);
				\draw[color=black] (0.59673683013225325,-2.55654738877517) node {\large{$p_{k+l-2}^1$}};
				\draw [fill=black] (-2.05,-1.87) circle (0.7pt);
				\draw [fill=black] (-1.69,-2.23) circle (0.7pt);
				\draw [fill=black] (-1.37,-2.47) circle (0.7pt);
				\draw [fill=black] (1.45,0.79) circle (2pt);
				\draw[color=black] (1.5877852618807757,1.1765894868545235) node {\large{$b^1$}};
				\draw [fill=black] (6,3) circle (2pt);
				\draw[color=black] (6.191987408490736,3.4342484544972427) node {\large{$a^2$}};
				\draw [fill=black] (6,0) circle (2pt);
				\draw[color=black] (6.3,0.4299621117285848) node {\large{$v^2$}};
				\draw [fill=black] (4.467633708795116,0.8299410626889091) circle (3pt);
				\draw[color=black] (4.2631789737090505,1.06547369817904) node {\large{$b^2$}};
				\draw [fill=black] (9,0) circle (2pt);
				\draw[color=black] (9.26738112031901,0.4299621117285848) node {\large{$p_1^2$}};
				\draw [fill=black] (8.894067432756044,-1.1610652709802607) circle (2pt);
				\draw[color=black] (9.16072006672959,-0.7255326354901298) node {\large{$p_2^2$}};
				\draw [fill=black] (6,-3) circle (2pt);
				\draw[color=black] (6.591987408490736,-2.55654738877517) node {\large{$p_{k+l-2}^2$}};
				\draw [fill=black] (8.520753745193074,-2.014353699695619) circle (0.7pt);
				\draw [fill=black] (8.076332688570492,-2.298783175934072) circle (0.7pt);
				\draw [fill=black] (7.614134789683005,-2.583212652172525) circle (0.7pt);
			\end{scriptsize}
		\end{tikzpicture}
		\caption{The graph $G=[H]_S^2$}\label{ube}
	\end{center}
\end{figure}

\vskip 5pt

\indent Next, we establish upper and lower bounds of $\chi_{i}(G)$ in terms of the order. We completely characterize all the extremal graphs. Before stating the theorem, let us introduce some graphs properties and constructions.
\vskip 5 pt

\indent A bipartite graph $G(V_1,V_2)$ has \emph{Property} $\Im_1$ if there exists two adjacent vertices $v_1 \in V_1$ and $v_2 \in V_2$ with $deg(v_i) \geq 2$, for $i= 1, 2$ such that the following conditions are satisfied:
\begin{enumerate}
	\item Every vertex in $ V_2-N(v_1)$ is adjacent to every vertex in $N(v_2)-\{v_1\}$.
	\item Every vertex in $V_1-N(v_2)$ is adjacent to every $N(v_1)-\{v_2\}$.
	\item If $deg(v_i) \geq 3$, then for every $x \in N(v_i)$, either $x$ is a pendant vertex or $N(x) \subseteq N(v_j)$ or $N(x)=V_i$ (where $i,j=1,2$ , $i \neq j$).
	\end{enumerate}
The bipartite graph $G(V_1,V_2)$ has \emph{Property} $\Im_2$ if there exists non-adjacent vertices $v_1 \in V_1$ and $v_2 \in V_2$ such that $N(v_1)=V_2-\{v_2\}$ and $N(v_2)=V_1-\{v_1\}$. An example of graphs in $G_1$ that has Property $\Im_1$ and $G_2$ that has Property $\Im_2$ is shown in Figure \ref{example2}.

\begin{figure}[h!]
	\begin{center}
	\begin{tikzpicture}[line cap=round,line join=round,>=triangle 45,x=1cm,y=1cm]
		\clip(-7,-1.5) rectangle (5,5);
		\draw [line width=0.5pt] (-5,3)-- (-3,3);
		\draw [line width=0.5pt] (-5,3)-- (-3,2);
		\draw [line width=0.5pt] (-5,3)-- (-3,1);
		\draw [line width=0.5pt] (-5,2)-- (-3,3);
		\draw [line width=0.5pt] (-5,2)-- (-3,2);
		\draw [line width=0.5pt] (-5,2)-- (-3,1);
		\draw [line width=0.5pt] (-5,2)-- (-3,0);
		\draw [line width=0.5pt] (-5.06,1.14)-- (-3,2);
		\draw [line width=0.5pt] (-5.06,1.14)-- (-3,1);
		\draw [line width=0.5pt] (-5.06,1.14)-- (-3,0);
		\draw [line width=0.5pt] (-3,4)-- (-5,2);
		\draw [line width=0.5pt] (-5,2)-- (-3,4);
		\draw [line width=0.5pt] (-3,4)-- (-5.06,1.14);
		\draw [line width=0.5pt] (1,3)-- (3,3);
		\draw [line width=0.5pt] (3,1)-- (1,1);
		\draw (1.9976600207364084,-0.8076917977183633) node[anchor=north west] {\textbf{$G_2$}};
		\draw (-4.05667075932149,-0.8076917977183633) node[anchor=north west] {\textbf{$G_1$}};
		\draw [line width=0.5pt] (1,3)-- (3,2);
		\draw [line width=0.5pt] (3,2)-- (1,1);
		\draw [line width=0.5pt] (3,3)-- (1,2);
		\draw [line width=0.5pt] (1,2)-- (3,1);
		\begin{scriptsize}
			\draw [fill=black] (-5,3) circle (3pt);
			\draw[color=black] (-5.3,3.37) node {\large{$v_{1}$}};
			\draw[color=black] (0.813462106828727,2.373771506185263) node {\large{$v_{1}$}};
			\draw [fill=black] (-5,2) circle (3pt);
			\draw [fill=black] (-5.06,1.14) circle (2.5pt);
			\draw [fill=black] (-3,3) circle (3pt);
			\draw[color=black] (-2.8106405609004828,3.37) node {\large{$v_{2}$}};
			\draw[color=black] (3.2,2.373771506185263) node {\large{$v_{2}$}};
			\draw [fill=black] (-3,2) circle (3pt);
			\draw [fill=black] (-3,1) circle (3pt);
			\draw [fill=black] (-3,0) circle (3pt);
			\draw [fill=black] (1,3) circle (3pt);
			\draw [fill=black] (3,3) circle (3pt);
			\draw [fill=black] (1,1) circle (3pt);
			\draw [fill=black] (3,1) circle (3pt);
			\draw [fill=black] (-3,4) circle (3pt);
			\draw [fill=black] (13.257228912415627,-4.396209549066939) circle (2.5pt);
			\draw [fill=black] (1,2) circle (3pt);
			\draw [fill=black] (3,2) circle (3pt);
		\end{scriptsize}
	\end{tikzpicture}
		\caption{ Graphs $G_1 \in \Im_1$ and $G_2 \in \Im_2$ }
		\label{example2}
		
	\end{center}
\end{figure}
\vskip 15 pt

\indent We, further, give the construction of graphs $G$ with $\chi_i(G)=2$. Let $K$ be obtained from two stars centered at $v_{1}$ and $x$ and a vertex $v_{2}$ by joining $v_{2}$ to $v_{1}$ and $x$. The graph $K$ is shown in Figure \ref{newexample}. Note that $deg(v_1) \geq 2$ and $deg(x) \geq 2$. A bipartite graph $G$ belongs to the family $\mathcal{F}_1$ if $G$ is obtained from $K$ by joining some vertices(possibly none) or all the vertices of $N(v_1)$ to $x$. A bipartite graph $G(V_1,V_2)$ is said to belong to the family $\mathcal{F}_2$ if there exists two vertices $v_1 \in V_1$ and $v_2 \in V_2$ such that $N(v_1)=V_2$ and $N(v_2)=V_1$. A bipartite graph $G(V_1,V_2)$ is said to belong to the family $\mathcal{F}_3$ if there exists two non-adjacent vertices $v_1 \in V_1$ and $v_2 \in V_2$ such that $N(v_1)=V_2-\{v_2\}$ and $N(v_2)=V_1-\{v_1\}$.
\vskip 5 pt

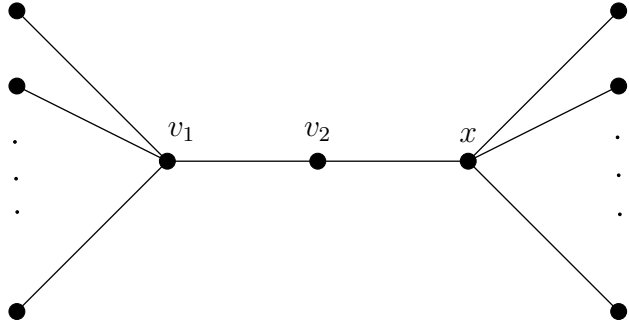
\begin{figure}[h]
	\begin{center}
\begin{tikzpicture}[line cap=round,line join=round,>=triangle 45,x=1cm,y=1cm]
	\clip(-5,-2) rectangle (5,5);
	\draw [line width=0.5pt] (-2,1)-- (-4,3);
	\draw [line width=0.5pt] (-2,1)-- (-4,2);
	\draw [line width=0.5pt] (-2,1)-- (-4,-1);
	\draw [line width=0.5pt] (-2,1)-- (0,1);
	\draw [line width=0.5pt] (0,1)-- (2,1);
	\draw [line width=0.5pt] (2,1)-- (4,3);
	\draw [line width=0.5pt] (2,1)-- (4,2);
	\draw [line width=0.5pt] (2,1)-- (4,-1);
	\begin{scriptsize}
		\draw [fill=black] (-2,1) circle (3pt);
		\draw[color=black] (-1.8119868357797562,1.393541363342264) node {\large{$v_1$}};
		\draw [fill=black] (0,1) circle (3pt);
		\draw[color=black] (0,1.393541363342264) node {\large{$v_2$}};
		\draw [fill=black] (2,1) circle (3pt);
		\draw[color=black] (2,1.3443736715145511) node {\large{$x$}};
		\draw [fill=black] (4,3) circle (3pt);
		\draw [fill=black] (4,2) circle (3pt);
		\draw [fill=black] (4,-1) circle (3pt);
		\draw [fill=black] (-4,3) circle (3pt);
		\draw [fill=black] (-4,2) circle (3pt);
		\draw [fill=black] (-4,-1) circle (3pt);
		\draw [fill=black] (-4.026093331139379,1.256284604574103) circle (0.5pt);
		\draw [fill=black] (-4.0108046170483265,0.7670457536603332) circle (0.5pt);
		\draw [fill=black] (-3.995515902957271,0.3236730450197293) circle (0.5pt);
		\draw [fill=black] (3.9851928525735953,1.3174394609383233) circle (0.5pt);
		\draw [fill=black] (4.000481566664651,0.8129118959334974) circle (0.5pt);
		\draw [fill=black] (4.015770280755706,0.293095616837617) circle (0.5pt);
	\end{scriptsize}
\end{tikzpicture}

		\caption{ The Graph $K$ }
		\label{newexample}
		
	\end{center}
\end{figure}

\vskip 15 pt

\indent We are ready to establish our next main result. Although the bounds are trivial, we completely characterize all the extremal graphs.
\vskip 5 pt

\begin{them}\label{ob1}
For any non-trivial graph $G$ without any isolated vertex, we have that
$$2 \leq \chi_{i}(G) \leq n.$$
A graph $G$ satisfies the upper bound if and only if $G = K_{n}$. For the lower bound, the extremal graphs $G$ are stars, or otherwise the following statements are equivalent:\\

\indent (i) $\chi_{i}(G) = 2$,\\

\indent (ii) $G$ has Properties $\Im_1$ or $\Im_2$.\\

\indent (iii) $G \in \mathcal{F}_1 \cup \mathcal{F}_2 \cup \mathcal{F}_3$.
\end{them}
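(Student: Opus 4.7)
The trivial bounds $2 \le \chi(G) \le \chi_i(G) \le n$ follow from $G$ containing an edge (lower bound) and from the trivial all-distinct coloring (upper bound), and $\chi_i(K_n) = n$ is immediate. For the converse of the upper-bound extremality, the plan is to show $\chi_i(G) \le n-1$ whenever $G \ne K_n$, by starting from the trivial coloring and merging two color classes. Choose any maximal irredundant set $R$; since $G$ has no isolated vertex, $R$ is a proper subset of $V(G)$. Any non-edge with both endpoints in $V(G) \setminus R$, or with one endpoint in $V(G) \setminus R$ and the other in $R$, permits such a merge, and preserves both properness and the colorfulness of $R$. The sole obstruction is that every vertex of $V(G) \setminus R$ is joined to every other vertex and hence has full degree; picking such a $u$, the set $\{u\}$ is a maximal irredundant set by the argument of Proposition~\ref{irp1}, and as $G \ne K_n$ the graph $G - u$ contains a non-edge whose colors we merge while keeping $\{u\}$ colorful.

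For the lower-bound characterization, the star case is already handled by the corollary to Proposition~\ref{irp1}. Fix a non-star $G$ with $\chi_i(G) = 2$. Then $G$ is bipartite with parts $V_1, V_2$, and admits a colorful maximal irredundant set $R$ with $|R| \le 2$ in a proper 2-coloring. If $|R|=1$, the unique vertex of $R$ is forced to be of full degree (by the argument of Proposition~\ref{irp1}), which combined with bipartiteness makes $G$ a star -- excluded. So $R = \{v_1, v_2\}$ with $v_i \in V_i$. If $v_1 v_2 \notin E(G)$, each $v_i$ is its own private neighbor in $R$; maximality forbids any $z \in V_2 \setminus \{v_2\}$ non-adjacent to $v_1$ (else $\{v_1, v_2, z\}$ would still be irredundant, each vertex again being its own private neighbor), yielding $N(v_1) = V_2 \setminus \{v_2\}$ and symmetrically $N(v_2) = V_1 \setminus \{v_1\}$, which is Property $\Im_2$. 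If $v_1 v_2 \in E(G)$, irredundance forces $\deg(v_i) \ge 2$, and one unpacks the maximality of $\{v_1, v_2\}$ by splitting $w \in V(G) \setminus \{v_1, v_2\}$ into the four regions $V_1 \setminus N(v_2)$, $N(v_2) \setminus \{v_1\}$, $V_2 \setminus N(v_1)$, and $N(v_1) \setminus \{v_2\}$, computing in each case which of $pn[v_1, R \cup \{w\}]$, $pn[v_2, R \cup \{w\}]$, $pn[w, R \cup \{w\}]$ vanishes. The first and third regions yield conditions~(2) and~(1) of $\Im_1$, respectively, while the second and fourth (using that $v_j$ loses its private neighbor only when $\deg(v_j) = 2$) translate into condition~(3). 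The converse (ii) $\Rightarrow$ (i) is a direct verification that $\{v_1, v_2\}$ is maximal irredundant and colorful in the bipartition 2-coloring.

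The equivalence (ii) $\Leftrightarrow$ (iii) is a translation of structural conditions into explicit constructions. By definition, $\mathcal{F}_3$ is precisely the family of graphs with Property $\Im_2$. For $\Im_1$, condition~(3) decomposes every vertex $x \in N(v_i)$ into one of three admissible types: pendant, $N(x) \subseteq N(v_j)$, or $N(x) = V_i$. When some neighbor of $v_1$ or $v_2$ is of the third ``universal'' type, conditions~(1) and~(2) force the saturation $N(v_1) = V_2$ and $N(v_2) = V_1$, placing $G$ in $\mathcal{F}_2$; otherwise the graph is assembled from the backbone $K$ of Figure~\ref{newexample} by freely adjoining edges between pendants of $v_1$ and $x$, which is the definition of $\mathcal{F}_1$. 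The main obstacle throughout is the adjacent subcase of (i) $\Rightarrow$ (ii): the four-way case split on the location of $w$ must be executed carefully so that the vanishing conditions on the three relevant private-neighbor sets reassemble exactly into the three structural conditions of $\Im_1$; once this is in hand, the translation into $\mathcal{F}_1 \cup \mathcal{F}_2$ is essentially bookkeeping.
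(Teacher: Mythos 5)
Your proposal is correct and, for the bulk of the theorem (the characterization of $\chi_{i}(G)=2$), follows essentially the same route as the paper: reduce to a colorful maximal irredundant pair $\{v_1,v_2\}$ meeting both sides of the bipartition (after discarding the singleton case, which forces a full-degree vertex and hence a star), split on whether $v_1v_2$ is an edge, and read the conditions of $\Im_1$ and $\Im_2$ off the maximality of the pair. The paper runs the adjacent case contrapositively---assume a condition of $\Im_1$ fails and exhibit an irredundant triple $\{v_1,v_2,x\}$---whereas you run it directly by locating the third vertex in one of four regions and asking which private-neighbor set vanishes; these are the same computation, and your observation that $\mathcal{F}_3$ coincides with $\Im_2$ by definition is also how the paper disposes of that family. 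The one place you genuinely diverge is the converse of the upper-bound extremality: the paper picks two non-adjacent vertices $x,y$, gives them a common color, notes that $V(G)\setminus\{x,y\}$ is a dominating set, and extracts from it a minimal dominating---hence, by Proposition \ref{intro}, maximal irredundant---colorful set; you instead fix a maximal irredundant set $R$ first (correctly noting $R\neq V(G)$) and merge two color classes of the trivial coloring, falling back to a full-degree vertex when every non-edge lies inside $R$. Both arguments are valid; the paper's avoids the fallback case entirely, while yours avoids invoking Proposition \ref{intro}. Be aware that your adjacent-case analysis and the $(ii)\Leftrightarrow(iii)$ translation are only sketched; the correspondences you assert (first and third regions giving conditions (b) and (a), second and fourth giving condition (c) via the $\deg(v_j)=2$ caveat, and the universal-neighbor subcase landing in $\mathcal{F}_2$) are the right ones, but a complete write-up would need to carry out these verifications explicitly, as the paper does in Claim \ref{claim1} and its Cases 1--3.
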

\begin{proof}
Since $G$ is non-trivial and has no isolated vertex, it follows that $2 \leq \chi(G) \leq \chi_{i}(G)$, establishing the lower bound. The upper bound is obvious.
\vskip 5 pt

\indent We first characterize the graphs achieving the upper bound. Let $G = K_n$. By Proposition \ref{irp1}, $\chi_{i}(G)=\chi(G)=n$. Conversely, we let $\chi_{i}(G)=\chi(G)=n$. Suppose to the contrary that $G \neq K_n$, then there exists at least two non-adjacent vertices $x,y$ in $G$. Consider the coloring $\mathcal{C}$ of $G$ in which $x,y$ are given same colors and all other vertices are given different colors. Clearly $S=V(G)-\{x,y\}$ is a dominating set of $G$. Let $R$ be the minimal dominating set of $G$ contained in $S$. Then $R$ is maximal irredundant in $G$ ( by Proposition \ref{intro}). Clearly $R$ is $\mathcal{C}$-colorful and hence  $\chi_{i}(G) \leq |\mathcal{C}|=n-1$, contradicting $\chi_{i}(G)=\chi(G)=n$. Thus, $G = K_n$.
\vskip 5 pt

\indent Next, we characterize the graphs achieving the lower bound. We will prove that $(i)$ and $(ii)$ are equivalent. Let $\chi_{i}(G)=2$. Since $\chi_{i}(G) \geq \chi(G)$ implies that $G$ is bipartite with partition $V_1$ and $V_2$. Since $\chi_{i}(G)=2$, there exist $u \in V_1$ and $v \in V_2$ such that $\{u, v\}$ is a maximal irredundant set of $G$. Now we will assume that $G$ does not have Properties $\Im_1$ and $\Im_2$ and arrive at a contradiction. We shall prove that for every $v_1 \in V_1$ and $v_2 \in V_2$, $\{v_1,v_2\}$ is not a maximal irredundant set of $G$, which contradicts our hypothesis. So let us  Assume that $\{v_1,v_2\}$ is a maximal irredundant set of $G$, for some $v_1 \in V_1$ and $v_2 \in V_2$. First we consider the case when $v_1$ is adjacent to $v_2$. Since $G$ does not have Property $\Im_1$, condition (a) or (b) or (c) is not true. Let condition (a) fails. Then there is some $x \in V_2-N(v_1)$ such that $x$ is non-adjacent to some vertex in $N(v_2)-\{v_1\}$. Then the set $\{v_1,v_2,x\}$ is irredundant in $G$, contradicting that $\{v_1,v_2\}$ is maximal irredundant set in $G$. Therefore condition (a) is true. Similarly we arrive at a contradiction if condition (b) fails. Therefore condition (b) is true. Now suppose condition (c) fails, then there exists a vertex $x$, say $x \in N(v_2)$ (where $deg(v_2) \geq 3$) such that condition (c) fails for $x$. Then $deg(x)\geq2$, $N(x) \nsubseteq N(v_1)$ and $N(x) \neq V_2$. This implies that there is some neighbor of $v_1$ which is not adjacent to $x$ (since every non-neighbor of $v_1$ is adjacent to $x$ by condition (a)) and there is some non-neighbor $y$ of $v_1$ in $V_2$ such that $x$ is adjacent to $y$. Then set $\{v_1,v_2,x\}$ is an irredundant set of $G$, a contradiction. Thus condition (c) is also true.  Next we look at the case when $v_1$ and $v_2$ are not adjacent. Since $G$ does not Property $\Im_2$, then there is some $x$ in $V_1$ or $V_2$, say $x \in V_1$ such that $v_2$ is non-adjacent to $x$. Then the set $\{v_1,v_2,x\}$ is irredundant in $G$, contradicting that $\{v_1,v_2\}$ is maximal irredundant set in $G$. Therefore our assumption is wrong and hence $G$ has Properties $\Im_1$ or $\Im_2$.
\vskip 5 pt

\indent Conversely, let $G$ has Properties $\Im_1$ or $\Im_2$. Let $\mathcal{C}=(V_1,V_2)$ be the coloring of $G$. Then there exists vertex $v_1 \in V_1$ and $v_2 \in V_2$ satisfying the conditions of Properties $\Im_1$ or $\Im_2$.
\vskip 5 pt

\begin{cl}\label{claim1}
The set $\{v_1,v_2\}$  is maximal irredundant set of $G$
\end{cl}
\noindent \textbf{Proof of Claim \ref{claim1}}. Suppose to the contrary that $\{v_1,v_2\}$ is not a maximal irredundant set of $G$.
\vskip 5 pt

\indent We first consider the case when $v_1$ is adjacent to $v_2$ satisfying the conditions of the Property $\Im_1$. Since the set $\{v_1,v_2\}$ is not a maximal irredundant, it follows that there is some $x$ (say $x \in V_1$) such that $\{v_1,v_2,x\}$ is an irredundant set of $G$. Let $x$ be non-adjacent to $v_2$. Now by the condition (b), $x$ is adjacent to every vertex in $N(v_1)-\{v_2\}$ which imply that $v_1$ has no private neighbor with respect to the set $\{v_1,v_2,x\}$ contradicting that $\{v_1,v_2,x\}$ is irredundant in $G$. Now suppose $x$ is adjacent to $v_2$. If $deg(v_2)=2$, then $v_2$ has no private neighbor with respect to the set $\{v_1,v_2,x\}$, a contradiction. So let $deg(v_2)\geq 3$. Now by the condition (c), for every $x \in N(v_2)$, either $x$ is pendant or $N(x) \subseteq N(v_1)$ or $N(x)=V_2$. This implies that if $x$ is pendant or $N(x) \subseteq N(v_1)$, then $x$ has no private neighbor with respect to the set $\{v_1,v_2,x\}$ and if $N(x)=V_2$, then $v_1$ has no private neighbor with respect to the set $\{v_1,v_2,x\}$, a contradiction.
\vskip 5 pt

\indent We now consider the case when $v_1$ and $v_2$ are non-adjacent satisfying the condition Property $\Im_2$. Since the set $\{v_1,v_2\}$ is not maximal irredundant, it follows that there is some $x$ (say $x \in V_1$) such that $\{v_1,v_2,x\}$ is an irredundant set of $G$. Since $N(v_1)=V_2-\{v_2\}$ and $N(v_2)=V_1-\{v_1\}$ imply that  $x$ has no private neighbor with respect to the set $\{v_1,v_2,x\}$ contradicting that $\{v_1,v_2,x\}$ is irredundant in $G$. This proves Claim \ref{claim1}.
\vskip 5 pt

\indent Thus $\{v_1,v_2\}$ is maximal irredundant set of $G$ and $\mathcal{C}$-colorful. Therefore $\chi_{i}(G)=2$.
\vskip 5 pt

\indent Finally, we will prove that $(i)$ and $(iii)$ are equivalent. Clearly, all graphs $G$ in the classes $\mathcal{F}_{1}, \mathcal{F}_{2}$ and $\mathcal{F}_{3}$ satisfy $\chi_{i}(G)=2$. Thus, we may let $G$ be a graph with $\chi_{i}(G)=2$. By $(ii)$, $G$ has Properties $\Im_1$ or $\Im_2$. If $G$ has Property $\Im_2$, then  $G \in \mathcal{F}_3$. Now, we may assume that $G$ has Property $\Im_1$. Thus, there exists two adjacent vertices $v_1$ and $v_2$ satisfying the conditions of the Property $\Im_1$. Also $deg(v_i)\geq 2$, for each $i=1,2$. We distinguish $3$ cases.
\vskip 5 pt

\noindent \textbf{Case 1.} $deg(v_1)=2$ and $deg(v_2)=2$.\\
\indent Let $x$ (other than $v_2$) be the neighbor of $v_1$ and $y$ (other than $v_1$) be the neighbor of $v_2$. By Conditions (a) and (b), every non-neighbor $v_1$ is adjacent to $y$ and every non-neighbor $v_2$ is adjacent to $x$. If $x$ is adjacent to $y$, then $G \in \mathcal{F}_2$; otherwise $G \in \mathcal{F}_3$.
\vskip 5 pt

\noindent \textbf{Case 2.} $deg(v_1) \geq 3$ and $deg(v_2)=2$.\\
\indent Let $x$ (other than $v_1$) be the neighbor of $v_2$. By Condition (a), every non-neighbor $v_1$ is adjacent to $x$ and they are pendant. Now some (possible none) or all the vertices of $N(v_1)-\{v_2\}$ are adjacent to $x$. Therefore $G \in \mathcal{F}_1$. Even if $deg(v_2) \geq 3$ and $deg(v_1)=2$, we get a graph belonging to $\mathcal{F}_1$.
\vskip 5 pt	

\noindent \textbf{Case 3.} $deg(v_1) \geq 3$ and $deg(v_2) \geq 3$.\\
\indent Let $x$ (other than $v_2$) be the neighbor of $v_1$ and $y$ (other than $v_1$) be the neighbor of $v_2$. If $N(v_1)=V_2$ and $N(v_2)=V_1$, then $G \in \mathcal{F}_2$. So let $N(v_1) \neq V_2$ and $N(v_2)=V_1$. Now by Condition (a), every non-neighbor of $v_1$ is adjacent to $y$. Now by the Condition (c), $N(y)=V_2$. Therefore there exists vertices $y \in V_1$ and $v_2 \in V_2$ such that $N(y)=V_1$ and $N(v_2)=V_1$. Therefore $G \in \mathcal{F}_2$. Similarly if $N(v_1) \neq V_2$ and $N(v_2) \neq V_1$, we can show that $N(y)=V_1$ and $N(x)=V_1$. Therefore $G \in \mathcal{F}_2$. If $G$ has a full degree vertex, then $G$ is a star.\\
	Conversely, if $G \in \mathcal{F}_1 \cup \mathcal{F}_2 \cup \mathcal{F}_3$, then $\{v_1,v_2\}$ ia a maximal irredundant set of $G$ and is colorful in any $\chi$-coloring of $G$. Thus $\chi_{i}(G)=2$.
	 \qed
\end{proof}

\vskip 5pt
\indent We conclude this section by establishing the realizability of graphs with prescribed irredundance chromatic number.
\vskip 5 pt

\noindent For $k, n \in \mathbb{N}$, we let
\vskip 5 pt

\indent $\mathcal{B}(n, k)$ the family of graphs $G$ of order $n$ such that $\chi_{i}(G)=k$.
\vskip 5 pt

\noindent We prove that the class $\mathcal{B}(n, k)$ is non-empty for any natural numbers $k, n$ such that  $2 \leq k \leq n$.
\vskip 5 pt

\begin{them}\label{thmprescribe}
For $k, n \in \mathbb{N}$ such that  $2 \leq k \leq n$, we have $\mathcal{B}(n, k) \neq \emptyset$.
\end{them}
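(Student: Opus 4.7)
The plan is to exhibit an explicit graph $G(n,k)$ of order $n$ with $\chi_i(G(n,k))=k$ for every prescribed pair $(n,k)$ with $2\leq k\leq n$, so that membership in $\mathcal{B}(n,k)$ is verified by construction. The key idea is to exploit Proposition \ref{irp1}: if a graph contains a full degree vertex, then its irredundance chromatic number collapses to its ordinary chromatic number, and the latter is easy to control.

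The construction I propose is the following. For $2\leq k\leq n$, let $G(n,k)$ be obtained from a complete graph $K_k$ on vertices $v_1,v_2,\ldots,v_k$ by attaching $n-k$ pendant vertices $u_1,u_2,\ldots,u_{n-k}$ all to $v_1$. Clearly $|V(G(n,k))|=n$. The boundary case $k=n$ gives $G(n,n)=K_n$, so $\chi_i(G(n,n))=n=k$ by the upper-bound half of Theorem \ref{ob1} (or directly by the corollary to Proposition \ref{irp1}).

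For $2\leq k<n$, the first step is to observe that $v_1$ is adjacent to $v_2,\ldots,v_k$ in $K_k$ and to all pendants $u_1,\ldots,u_{n-k}$, so $\deg(v_1)=n-1$, i.e., $v_1$ is a full degree vertex of $G(n,k)$. Proposition \ref{irp1} then yields $\chi_i(G(n,k))=\chi(G(n,k))$. The second step is to pin down $\chi(G(n,k))$: the clique $K_k\subseteq G(n,k)$ forces $\chi(G(n,k))\geq k$, and a proper $k$-coloring is obtained by assigning color $i$ to $v_i$ for $1\leq i\leq k$ and color $2$ to each pendant $u_j$ (permitted since every $u_j$ is adjacent only to $v_1$, which receives color $1$). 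Hence $\chi(G(n,k))=k$, which combined with the previous equality gives $\chi_i(G(n,k))=k$ and shows $G(n,k)\in\mathcal{B}(n,k)$.

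I do not expect any serious obstacle: the only thing to watch is the edge cases. When $k=n$ the construction degenerates to $K_n$ and must be handled separately (since then Proposition \ref{irp1} would not be needed and $G$ has no pendants), but Theorem \ref{ob1} covers it cleanly. When $k=2$ the construction produces the star $K_{1,n-1}$, whose irredundance chromatic number is $2$ as already noted in a corollary of Proposition \ref{irp1}, so this is consistent. Thus the single family $\{G(n,k):2\leq k\leq n\}$ realizes all admissible pairs and proves $\mathcal{B}(n,k)\neq\emptyset$.
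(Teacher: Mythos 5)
Your proposal is correct and is essentially identical to the paper's own proof: the same construction ($K_k$ with $n-k$ pendants attached to one clique vertex), the same appeal to Proposition \ref{irp1} via the full degree vertex $v_1$, and the same explicit $k$-coloring. Your extra care with the boundary cases $k=n$ and $k=2$ is a minor refinement but does not change the argument.
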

\begin{proof}
Let $K_k$ be the complete graph with $V(K_k)=\{v_1,v_2, \ldots, v_k\}$. Then, we let $H(n, k)$ be a graph obtained from $K_k$ by joining $n-k$ vertices, namely, $u_1, u_2, \ldots, u_{n-k}$ to the vertex $v_1$. Let $\mathcal{C}$ be the coloring of $H(n, k)$ such that $col(v_i)=i$, for $1 \leq i \leq k$ and $col(u_j)=2$, for $1 \leq j \leq n-k$. Clearly $\chi(H(n, k))=k$ and by Proposition \ref{irp1}, $\{v_1\}$ is a maximal irredundant set of $H(n, k)$. Thus $\chi_{i}(H(n, k))=k$, implying that $H(n, k) \in \mathcal{B}(n, k)$. This proves the theorem.  \qed
\end{proof}
\vskip 5 pt

\indent An example of the graph $H(6, 4) \in \mathcal{B}(6, 4)$ by the construction of Theorem \ref{thmprescribe} is shown in Figure \ref{example3}.
\vskip 5 pt

\begin{figure}[h]
	\begin{center}
		\begin{tikzpicture}[line cap=round,line join=round,>=triangle 45,x=1cm,y=1cm]
			\clip(-4.3,-0.54) rectangle (10,6);
			\draw [line width=0.5pt] (1,3)-- (3,3);
			\draw [line width=0.5pt] (1,3)-- (1,1);
			\draw [line width=0.5pt] (1,3)-- (3,1);
			\draw [line width=0.5pt] (3,1)-- (1,1);
			\draw [line width=0.5pt] (1,1)-- (3,3);
			\draw [line width=0.5pt] (3,3)-- (3,1);
			\draw [line width=0.5pt] (3,3)-- (4,5);
			\draw [line width=0.5pt] (3,3)-- (5,4);
			\begin{scriptsize}
				\draw [fill=black] (1,3) circle (3pt);
				\draw[color=black] (0.5,3.2) node {\large{$v_4$}};
				\draw [fill=black] (1,1) circle (3pt);
				\draw[color=black] (0.5,1.2) node {\large{$v_3$}};
				\draw [fill=black] (3,3) circle (3pt);
				\draw[color=black] (3.4,2.8) node {\large{$v_1$}};
				\draw [fill=black] (3,1) circle (3pt);
				\draw[color=black] (3.4,1.2) node {\large{$v_2$}};
				\draw [fill=black] (4,5) circle (3pt);
				\draw[color=black] (4.22,5.4) node {\large{$u_1$}};
				\draw [fill=black] (5,4) circle (3pt);
				\draw[color=black] (5.22,4.4) node {\large{$u_2$}};
			\end{scriptsize}
		\end{tikzpicture}
		\caption{The graph $H(6, 4)$}
		\label{example3}
	\end{center}
	
\end{figure}
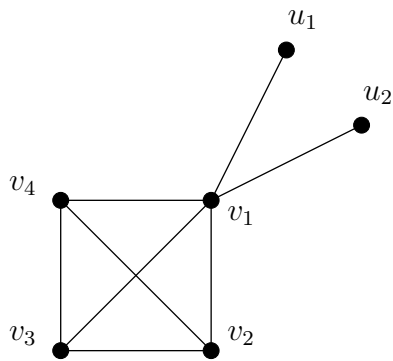




\section{On irredundance compelling coloring of graphs}\label{compelir}
In this section, we present results on irredundance compelling chromatic number of graphs. We also describe some conditions about when a graph admits irredundance compelling coloring.
\vskip 5 pt

\begin{prop}\label{degreemore}
Let $G$ be a $IRC$-colorable graph. Then $\delta(G)\geq 2$.
\end{prop}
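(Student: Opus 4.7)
The plan is to prove the contrapositive: if $G$ contains a vertex of degree at most one, then $G$ admits no $IRC$-coloring. Since the paper assumes $G$ to be connected and (implicitly) non-trivial, an isolated vertex cannot occur, so the argument reduces to deriving a contradiction from the existence of a pendant vertex $v$ with unique neighbor $u$. I will fix any purported $IRC$-coloring $\mathcal{C} = (V_1, V_2, \ldots, V_k)$ of $G$ and construct a rainbow committee that fails to be irredundant.

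Since $\mathcal{C}$ is proper, $v$ and $u$ lie in distinct color classes; without loss of generality, $v \in V_1$ and $u \in V_2$. The key step is to form the rainbow committee
\[
R = \{v, u\} \cup \{w_i : 3 \le i \le k\},
\]
where each $w_i$ is an arbitrarily chosen element of $V_i$. By the $IRC$ hypothesis, this $R$ must be an irredundant set of $G$.

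I would then derive the contradiction by inspecting the private neighbors of $v$ with respect to $R$. Because $N[v] = \{v, u\}$ and $u \in R - \{v\}$, one has $\{v, u\} \subseteq N[u] \subseteq N[R - \{v\}]$, which forces $pn[v, R] = N[v] - N[R - \{v\}] = \emptyset$. Thus $v$ has no private neighbor in $V - R$ and is not isolated in $G[R]$ (being adjacent to $u \in R$), contradicting the irredundance of $R$. The desired bound $\delta(G) \ge 2$ follows.

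The only subtlety worth flagging is the observation that both endpoints of a pendant edge can always be forced into the same rainbow committee, which is immediate from the fact that they receive different colors under any proper coloring. Once this placement is in hand, the private-neighbor calculation is essentially a one-line verification, so no deeper combinatorial obstacle appears in the argument.
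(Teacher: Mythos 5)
Your proposal is correct and follows the same route as the paper: take a pendant vertex $v$ with its unique neighbor $u$, note that a proper coloring places them in distinct color classes so some rainbow committee contains both, and observe that $v$ then has empty private neighbor set, contradicting irredundance. The paper states this in one line; you have merely supplied the (correct) details.
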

\begin{proof}
	Suppose $\delta(G)< 2$ and let $v$ be a pendant vertex, $u$ be the vertex adjacent to $v$. Then the any $RC$ containing $v$ and $u$ is not an irredundant set. Hence $\delta(G) \geq 2$. \qed
\end{proof}
\vskip 5pt

\indent Hence all the graphs considered throughout in this section are of minimum degree at least 2. By Proposition \ref{degreemore}, we have the following corollary.
\vskip 5 pt

\begin{cor}
Non-trivial trees do not admit IRC-coloring.
\end{cor}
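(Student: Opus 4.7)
The plan is to derive this corollary as an immediate consequence of Proposition \ref{degreemore}, which states that any $IRC$-colorable graph must satisfy $\delta(G) \geq 2$. The only thing to verify is that every non-trivial tree has a vertex of degree $1$, so that the hypothesis of Proposition \ref{degreemore} forces non-$IRC$-colorability.

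First, I would let $T$ be a non-trivial tree, i.e., a tree on at least two vertices. It is a standard fact that every tree on at least two vertices has at least two leaves (pendant vertices); this follows, for example, by taking the endpoints of a longest path in $T$, each of which must be a leaf (otherwise the path could be extended, contradicting its maximality). Consequently $\delta(T) = 1$.

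Second, I would apply the contrapositive of Proposition \ref{degreemore}: since $\delta(T) = 1 < 2$, the tree $T$ is not $IRC$-colorable. Writing this out concretely using the proof idea of Proposition \ref{degreemore}: any proper coloring $\mathcal{C}$ of $T$ has at least two colors (as $T$ has an edge), and if $v$ is a leaf with unique neighbor $u$, any rainbow committee of $\mathcal{C}$ must contain vertices of color $\mathrm{col}(v)$ and $\mathrm{col}(u)$. If we choose such an $RC$ to contain $v$ and $u$ themselves, then $v$ has its only closed neighbor $u$ inside the committee, so $v$ has no private neighbor with respect to this $RC$, showing the committee is not irredundant.

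I do not anticipate any obstacle here, as the argument is essentially a one-line deduction from the preceding proposition combined with the basic fact that trees have leaves. The only judgement call is whether to state it as a bare corollary of Proposition \ref{degreemore} or to briefly reprise the pendant-vertex argument; for clarity I would do the former, since the proposition is stated immediately above.
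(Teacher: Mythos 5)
Your proposal is correct and matches the paper exactly: the paper states this corollary without a separate proof, deriving it directly from Proposition \ref{degreemore} just as you do, since every non-trivial tree has a pendant vertex and hence $\delta(T)=1<2$. Your optional reprise of the private-neighbor argument for the leaf and its neighbor is also the same computation used in the paper's proof of that proposition.
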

\vskip 5pt

\indent Since non-trivial trees are the graphs with edge-connectivity $1$, the question that arises is that: does there exists an $IRC$-colorable graph of edge-connectivity $1$? So in the following, we construct an $IRC$-colorable graphs with vertex-connectivity 1 in Theorem \ref{ttt1}. Then, we proceed to construct an $IRC$-colorable graph with edge-connectivity $1$ in Theorem \ref{cor}.
\vskip 15 pt

\begin{figure}[h]
	\begin{center}
	\begin{tikzpicture}[line cap=round,line join=round,>=triangle 45,x=1cm,y=1cm]
		\clip(-4.3,-0.54) rectangle (11,4.5);
		\draw [line width=0.5pt] (0,4)-- (2,4);
		\draw [line width=0.5pt] (2,4)-- (2,2);
		\draw [line width=0.5pt] (2,2)-- (0,2);
		\draw [line width=0.5pt] (0,2)-- (0,4);
		\draw [line width=0.5pt] (0,2)-- (2,0);
		\draw [line width=0.5pt] (2,0)-- (5,0);
		\draw [line width=0.5pt] (5,0)-- (2,2);
		\draw [line width=0.5pt] (5,4)-- (7,4);
		\draw [line width=0.5pt] (7,4)-- (7,2);
		\draw [line width=0.5pt] (7,2)-- (5,2);
		\draw [line width=0.5pt] (5,2)-- (5,4);
		\draw [line width=0.5pt] (5,2)-- (2,0);
		\draw [line width=0.5pt] (7,2)-- (5,0);
		\begin{scriptsize}
			\draw [fill=black] (2,0) circle (3pt);
			\draw[color=black] (2.1,0.49) node {\large{$u_i$}};
			\draw [fill=black] (5,0) circle (3pt);
			\draw[color=black] (5.1,0.49) node {\large{$v_i$}};
			\draw [fill=black] (0,2) circle (3pt);
			\draw [fill=black] (0,4) circle (3pt);
			\draw [fill=black] (2,4) circle (3pt);
			\draw [fill=black] (2,2) circle (3pt);
			\draw [fill=black] (5,2) circle (3pt);
			\draw [fill=black] (5,4) circle (3pt);
			\draw [fill=black] (7,4) circle (3pt);
			\draw [fill=black] (7,2) circle (3pt);
		\end{scriptsize}
	\end{tikzpicture}
		\caption{ The graph $A^i$ }
		\label{fig1}
	\end{center}
	
\end{figure}
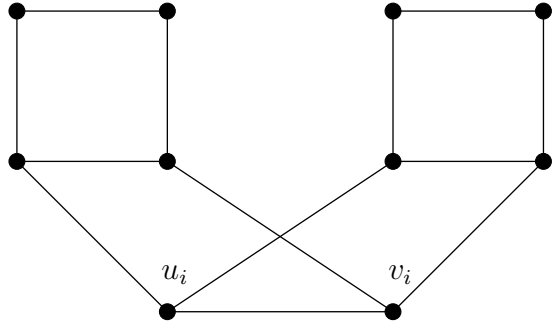
\vskip 5 pt






\begin{them} \label{ttt1}
There are infinitely many $IRC$-colorable graphs with vertex-connectivity 1.
\end{them}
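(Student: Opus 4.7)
The plan is to exhibit, for each integer $n \geq 2$, an $IRC$-colorable graph $G_{n}$ built by gluing $n$ copies of the graph $A^{i}$ shown in Figure \ref{fig1} along a common vertex, thereby producing a cut vertex. Concretely, I would take the disjoint union of $A^{1}, A^{2}, \ldots, A^{n}$ and then identify the vertices $u_{1}, u_{2}, \ldots, u_{n}$ to a single vertex $u$. Since removing $u$ disconnects the $n$ ``lobes'' from each other, $u$ is a cut vertex, so $\kappa(G_{n}) = 1$. As $n$ ranges over $\mathbb{N}$, this yields infinitely many pairwise non-isomorphic graphs.

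The crucial step is then to present an explicit proper coloring $\mathcal{C}$ of $G_{n}$ and verify that every rainbow committee is an irredundant set. First I would determine a proper coloring of a single copy of $A^{i}$ that assigns $u_{i}$ and $v_{i}$ colors that are ``compatible'' across copies (so that when we merge the $u_{i}$'s we retain properness). The idea is to use the symmetry and the 4-cycle structure of $A^{i}$ to ensure that no matter which rainbow committee $R$ is chosen, each vertex $w \in R$ either sits isolated in $G_{n}[R]$ or has a private neighbor in one of the two ``squares'' of the copy it lies in, outside $R$. The squares being $K_{4}$-like and the two attachment vertices $u_{i}, v_{i}$ each having three neighbors gives enough ``slack'' to produce private neighbors for every selection.

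The main obstacle I anticipate is verifying the irredundance property for every rainbow committee, since across many copies glued at a common vertex the number of committees grows exponentially with $n$. To handle this efficiently I would prove a local claim: in each copy $A^{i}$, the vertex of $R$ lying in that copy has its private neighbor within the same copy (using the specific structure of Figure \ref{fig1}), and the cut vertex $u$ (which must also belong to $R$ since its color class is $\{u\}$ if we color $u$ uniquely) is isolated in $G_{n}[R]$ because its only neighbors lie in different copies and receive colors distinct from those of the selected neighbors of $u$. This reduces the global verification to a finite case analysis inside a single $A^{i}$, which can be carried out by inspection.

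Finally I would conclude by noting that the construction produces an infinite family indexed by $n$, each a graph of vertex-connectivity $1$ admitting an $IRC$-coloring, which is precisely the statement of the theorem.
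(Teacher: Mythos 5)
Your overall strategy --- glue copies of the gadget $A^{i}$ at a single vertex to force vertex-connectivity $1$, two-color each bipartite lobe, and give the cut vertex its own color --- closely parallels the paper's construction (the paper keeps the $u_i, v_i$ distinct and instead adds a new apex vertex $x$ joined to every $u_i$ and $v_i$, then uses the coloring $(V_1, V_2, \{x\})$). So the construction itself is plausible. The problem lies in the one place where you commit to a concrete argument: the claim that the cut vertex $u$ ``is isolated in $G_{n}[R]$'' for every rainbow committee $R$ is false. Since $\{u\}$ is a singleton color class, $u$ belongs to every $RC$; but $u$ has $3n$ neighbors (each $v_i$ plus two square-corners per copy), and these neighbors necessarily sit in the other, non-singleton color classes, so a rainbow committee is free to contain a neighbor of $u$ (for instance $v_1$), in which case $u$ is \emph{not} isolated in $G_{n}[R]$. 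What actually saves the construction is not isolation but an external private neighbor: the other committee members occupy at most two lobes, so some $v_j$ or some corner-neighbor of $u$ in an untouched lobe survives in $N[u] \setminus N[R \setminus \{u\}]$. This argument is immediate for $n \geq 3$ and needs a closer look at the gadget when $n = 2$; as written, your step fails.

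Beyond that, the proposal never fixes an explicit coloring and never carries out the promised ``local claim'' that the committee members inside a single copy of $A^{i}$ retain private neighbors; it only asserts that the $4$-cycle structure gives enough ``slack.'' That finite case analysis is precisely the content of the theorem (it is also the part the paper compresses into ``it can be checked''), so until you supply the coloring and the per-copy verification, the proposal is a plan rather than a proof.
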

\begin{proof}
We establish this theorem by constructive proof. First, we let $A$ be the graph obtained by a cycle of length $10$, $x_{1}x_{2}...x_{10}x_{1}$ by adding the edges $x_{1}x_{6}, x_{2}x_{5}$ and $x_{7}x_{10}$. In our construction, we may need a number of copies of $A$. For the sake of convenience, we rename the vertices $x_{1}$ and $x_{10}$ of the $i^{th}$ copy $A^{i}$ to be $u_{i}$ and $v_{i}$, respectively. The graph $A^i$ is shown in Figure \ref{fig1}. For a natural number $k \geq 3$, we construct the graph $G(k)$ of vertex-connectivity 1 from $k$ copies of $A$ and a vertex $x$ by adding edges $u_i x$ and $v_i x$ for all $1 \leq i \leq k$. An example of $G(3)$ is shown in Figure \ref{fig2}. Clearly $G(k)$ is a graph of vertex-connectivity 1. By the construction, we have that $\cup^{k}_{i = 1} A^i = G(k) - x$ has the unique coloring (up to isomorphism) $(V_1, V_2)$. Then, it can be checked that the coloring $\mathcal{C}=(V_1,V_2, \{x\})$ is an $IRC$-coloring of $G(k)$. Hence $G(k)$ is $IRC$-colorable. This completes the proof.  \qed
\end{proof}
\vskip 15 pt

\begin{figure}[h!]
	\begin{center}
	\begin{tikzpicture}[line cap=round,line join=round,>=triangle 45,x=1cm,y=1cm]
		\clip(-8,-2.93) rectangle (13,6);
		\draw [line width=0.5pt] (-1,2)-- (1,2);
		\draw [line width=0.5pt] (1,2)-- (0,0);
		\draw [line width=0.5pt] (-1,2)-- (0,0);
		\draw [line width=0.5pt] (0,0)-- (2,1);
		\draw [line width=0.5pt] (2,1)-- (2,-1);
		\draw [line width=0.5pt] (2,-1)-- (0,0);
		\draw [line width=0.5pt] (0,0)-- (-2,-1);
		\draw [line width=0.5pt] (-2,-1)-- (-2,1);
		\draw [line width=0.5pt] (-2,1)-- (0,0);
		\draw [line width=0.5pt] (-2,5)-- (-1,5);
		\draw [line width=0.5pt] (-1,5)-- (-1,4);
		\draw [line width=0.5pt] (-1,4)-- (-2,4);
		\draw [line width=0.5pt] (-2,4)-- (-2,5);
		\draw [line width=0.5pt] (1,5)-- (1,4);
		\draw [line width=0.5pt] (1,4)-- (2,4);
		\draw [line width=0.5pt] (2,4)-- (2,5);
		\draw [line width=0.5pt] (2,5)-- (1,5);
		\draw [line width=0.5pt] (1,4)-- (-1,2);
		\draw [line width=0.5pt] (2,4)-- (1,2);
		\draw [line width=0.5pt] (-2,4)-- (-1,2);
		\draw [line width=0.5pt] (-1,4)-- (1,2);
		\draw [line width=0.5pt] (4,2)-- (4,1);
		\draw [line width=0.5pt] (4,1)-- (5,1);
		\draw [line width=0.5pt] (5,1)-- (5,2);
		\draw [line width=0.5pt] (5,2)-- (4,2);
		\draw [line width=0.5pt] (4,-1)-- (4,-2);
		\draw [line width=0.5pt] (4,-2)-- (5,-2);
		\draw [line width=0.5pt] (5,-2)-- (5,-1);
		\draw [line width=0.5pt] (5,-1)-- (4,-1);
		\draw [line width=0.5pt] (4,-1)-- (2,1);
		\draw [line width=0.5pt] (4,-2)-- (2,-1);
		\draw [line width=0.5pt] (4,1)-- (2,-1);
		\draw [line width=0.5pt] (4,2)-- (2,1);
		\draw [line width=0.5pt] (-5,2)-- (-4,2);
		\draw [line width=0.5pt] (-4,2)-- (-4,1);
		\draw [line width=0.5pt] (-4,1)-- (-5,1);
		\draw [line width=0.5pt] (-5,2)-- (-5,1);
		\draw [line width=0.5pt] (-5,-1)-- (-4,-1);
		\draw [line width=0.5pt] (-4,-1)-- (-4,-2);
		\draw [line width=0.5pt] (-4,-2)-- (-5,-2);
		\draw [line width=0.5pt] (-5,-2)-- (-5,-1);
		\draw [line width=0.5pt] (-4,2)-- (-2,1);
		\draw [line width=0.5pt] (-4,1)-- (-2,-1);
		\draw [line width=0.5pt] (-2,1)-- (-4,-1);
		\draw [line width=0.5pt] (-2,-1)-- (-4,-2);
		\draw (-0.25,-0.3) node[anchor=north west] {\large{$x$}};
		\begin{scriptsize}
			\draw [fill=black] (0,0) circle (3pt);
			\draw [fill=black] (-2,1) circle (3pt);
			\draw[color=black] (-1.78,1.37) node {\large{$u_3$}};
			\draw [fill=black] (-2,-1) circle (3pt);
			\draw[color=black] (-1.78,-1.52) node {\large{$v_3$}};
			\draw [fill=black] (2,1) circle (3pt);
			\draw[color=black] (2.22,1.37) node {\large{$u_2$}};
			\draw [fill=black] (2,-1) circle (3pt);
			\draw[color=black] (2.22,-1.52) node {\large{$v_2$}};
			\draw [fill=black] (-1,2) circle (3pt);
			\draw[color=black] (-1.5,2.2) node {\large{$u_1$}};
			\draw [fill=black] (1,2) circle (3pt);
			\draw[color=black] (1.5,2.2) node {\large{$v_1$}};
			\draw [fill=black] (-1,5) circle (3pt);
			\draw [fill=black] (-1,4) circle (3pt);
			\draw [fill=black] (-2,5) circle (3pt);
			\draw [fill=black] (-2,4) circle (3pt);
			\draw [fill=black] (1,5) circle (3pt);
			\draw [fill=black] (1,4) circle (3pt);
			\draw [fill=black] (2,5) circle (3pt);
			\draw [fill=black] (2,4) circle (3pt);
			\draw [fill=black] (4,1) circle (3pt);
			\draw [fill=black] (5,1) circle (3pt);
			\draw [fill=black] (5,2) circle (3pt);
			\draw [fill=black] (4,2) circle (3pt);
			\draw [fill=black] (4,-1) circle (3pt);
			\draw [fill=black] (5,-1) circle (3pt);
			\draw [fill=black] (5,-2) circle (3pt);
			\draw [fill=black] (4,-2) circle (3pt);
			\draw [fill=black] (-4,1) circle (3pt);
			\draw [fill=black] (-5,1) circle (3pt);
			\draw [fill=black] (-5,2) circle (3pt);
			\draw [fill=black] (-4,2) circle (3pt);
			\draw [fill=black] (-4,-1) circle (3pt);
			\draw [fill=black] (-5,-1) circle (3pt);
			\draw [fill=black] (-5,-2) circle (3pt);
			\draw [fill=black] (-4,-2) circle (3pt);
		\end{scriptsize}
	\end{tikzpicture}
		\caption{ The graph $G(3)$}
		\label{fig2}
	\end{center}
	
\end{figure}
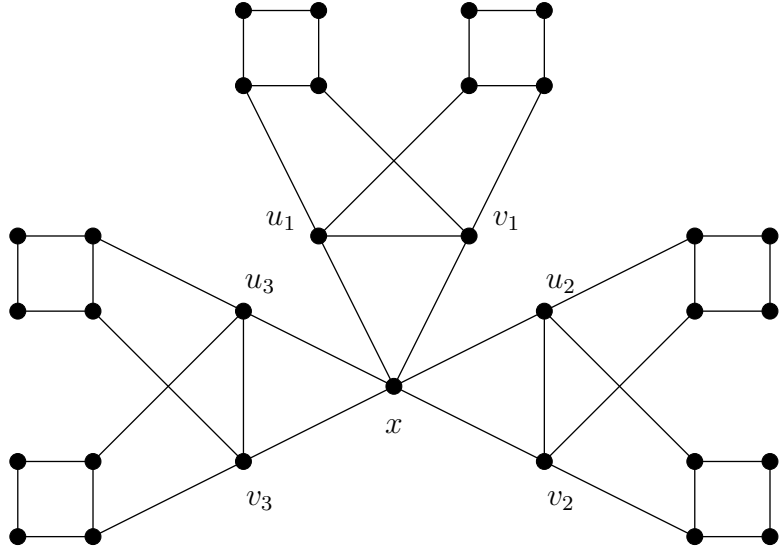
\vskip 15 pt

\begin{them} \label {cor}
		There are infinitely many $IRC$-colorable graphs with edge-connectivity 1.
\end{them}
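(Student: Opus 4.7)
The plan is to exhibit, for every integer $n \geq 2$, an $IRC$-colorable graph $H_n$ with $\kappa'(H_n)=1$; since $|V(H_n)|=4n$ grows with $n$, this will produce the desired infinite family.

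First I would take $n$ pairwise vertex-disjoint copies of $C_{4}$, namely $C^{(i)}:a_1^{(i)}a_2^{(i)}a_3^{(i)}a_4^{(i)}a_1^{(i)}$ for $1\leq i\leq n$, and form $H_n$ by adding the $n-1$ edges $a_1^{(i)}a_1^{(i+1)}$ for $1\leq i\leq n-1$. Each such added edge separates $C^{(1)}\cup\cdots\cup C^{(i)}$ from $C^{(i+1)}\cup\cdots\cup C^{(n)}$ when deleted, hence is a bridge, so $\kappa'(H_n)=1$. Every vertex sits on a $4$-cycle of $H_n$, giving $\delta(H_n)\geq 2$, which fulfills the necessary condition recorded in Proposition \ref{degreemore}.

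Next I would propose the $2$-coloring $\mathcal{C}=(X,Y)$ with $X=\{a_j^{(i)}:\, i+j \text{ is even}\}$ and $Y=\{a_j^{(i)}:\, i+j \text{ is odd}\}$. On each $C^{(i)}$ this is just the canonical bipartition of $C_4$, and for any bridge $a_1^{(i)}a_1^{(i+1)}$ the two endpoints have $i+j$ of opposite parities; hence $\mathcal{C}$ is proper. Since $|\mathcal{C}|=2$, a rainbow committee is a pair $\{u,v\}$ with $u\in X$ and $v\in Y$. If $u$ and $v$ are non-adjacent, then $u\in pn[u,\{u,v\}]$ and $v\in pn[v,\{u,v\}]$, so the pair is irredundant. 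If $u$ and $v$ are adjacent, the edge $uv$ is either (i) a cycle-edge inside some $C^{(i)}$, in which case the vertex of $C^{(i)}$ antipodal to $v$ is a private neighbour of $u$ and symmetrically for $v$; or (ii) a bridge $a_1^{(i)}a_1^{(i+1)}$, in which case the cycle-neighbours $a_2^{(i)},a_4^{(i)}$ of $a_1^{(i)}$ lie outside $N[a_1^{(i+1)}]$ and, symmetrically, $a_2^{(i+1)},a_4^{(i+1)}$ lie outside $N[a_1^{(i)}]$. In every case both vertices of the rainbow pair have private neighbours, so $\mathcal{C}$ is an $IRC$-coloring of $H_n$.

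The main obstacle is precisely the adjacent case in the verification above: one has to rule out that a vertex of an adjacent rainbow pair has its entire closed neighbourhood ``swallowed'' by its partner, which is what failed for pendant-bearing graphs in Proposition \ref{degreemore}. Here the combination of $\delta(H_n)\geq 2$ with the specific local geometry --- the antipode inside a $C_4$ for cycle-edges and the two remaining cycle-neighbours of $a_1^{(i)}$ for bridge-edges --- always supplies the needed ``second'' neighbour outside the partner's closed neighbourhood. This short finite case inspection is the only substantive step; the rest of the proof is bookkeeping.
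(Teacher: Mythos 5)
Your construction is correct, but it is genuinely different from the paper's. The paper does not build a new family from scratch: it takes two of the vertex-connectivity-$1$ graphs $G(k)$, $G(l)$ already constructed in Theorem \ref{ttt1} (each a cluster of modified $10$-cycles glued at a cut vertex), joins their cut vertices $x$ and $y$ by an edge to create the bridge, and shows that the $4$-coloring $(V_1,V_2,\{x\},\{y\})$ is an $IRC$-coloring. You instead chain $n$ copies of $C_4$ by bridges $a_1^{(i)}a_1^{(i+1)}$ and verify directly that in the natural proper $2$-coloring every rainbow pair is irredundant; your three-case check (non-adjacent pair, cycle edge, bridge edge) is complete and each claimed private neighbour is genuinely outside the partner's closed neighbourhood, so the argument stands. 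What your route buys is self-containment and economy: it needs no appeal to Theorem \ref{ttt1}, it uses only $\chi(H_n)=2$ colors (which also happens to supply supporting evidence for Conjecture \ref{con1}), and it produces graphs with arbitrarily many bridges rather than exactly one. What the paper's route buys is reuse of its existing machinery and an explicit illustration that the vertex-connectivity-$1$ and edge-connectivity-$1$ constructions can be derived from one another. The only point worth making explicit in your write-up is that with two color classes a rainbow committee is precisely a pair with one vertex from each class, which you do state; with that, the proof is sound.
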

\begin{proof}
We let $G(k)$ and $G(l)$ be two graphs as constructed in Theorem \ref{ttt1} whose the only cut vertex is $x$ and $y$, respectively. We construct a new graph $G(k, l)$ from $G(k)$ and $G(l)$ by joining $x$ and $y$. Clearly $G(k, l)$ is graph with edge-connectivity 1 in which the edge $xy$ is the bridge. Similarly, we can let $(V_1, V_2)$ be the unique coloring of $G(k, l) - xy$. Thus, the coloring $\mathcal{C}=(V_1,V_2, \{x\}, \{y\})$ is an $IRC$-coloring of $G(k, l)$, completing the proof. \qed
\end{proof}
\vskip 5 pt

\noindent The following is an interesting problem for future work,
\vskip 5 pt

\begin{quest}\label{q1}
	Which graphs admit $IRC$-coloring?
\end{quest}
\vskip 5 pt

\indent With the motive of answering Question \ref{q1}, we provide some conditions for a graph to be $IRC$-colorable as detailed in Propositions \ref{p1} and \ref{p2}.
\vskip 5 pt

\begin{prop}\label{p1}
	Let $G$ is a graph which $IRC$-coloring $\mathcal{C}$. Then for any $v \in V(G)$, at least two vertices in $N(v)$ should have same color in the coloring $\mathcal{C}$.	
\end{prop}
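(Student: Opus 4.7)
The plan is a proof by contradiction. Suppose, contrary to the conclusion, that there exists a vertex $v \in V(G)$ such that all vertices in $N(v)$ receive pairwise distinct colors under $\mathcal{C}$. Since $\mathcal{C}$ is a proper coloring, the colors on $N(v)$ are also all different from $\mathrm{col}(v)$, so the set $\{v\} \cup N(v)$ is $\mathcal{C}$-colorful. My goal will be to extend this set to a rainbow committee $R$ that fails to be irredundant, contradicting the fact that $\mathcal{C}$ is an $IRC$-coloring.

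The construction is direct. Let $k$ be the number of colors used by $\mathcal{C}$. Start with the set $\{v\} \cup N(v)$, whose $1+|N(v)|$ vertices already carry distinct colors. For each of the remaining $k - 1 - |N(v)|$ colors, pick an arbitrary vertex of that color, and add all such choices to form a set $R$. Then $R$ is, by construction, a rainbow committee of $G$ satisfying $N(v) \subseteq R$.

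The key step is verifying that $R$ is not irredundant; specifically, that $v$ has no private neighbor with respect to $R$ and is not isolated in $G[R]$. Recall that $\delta(G) \geq 2$ by Proposition \ref{degreemore}, so $|N(v)| \geq 2$; since $N(v) \subseteq R$, the vertex $v$ has at least two neighbors inside $R$, hence is not isolated in $G[R]$. For the privacy check, consider any $u \in N[v]$. If $u = v$, then since $v$ has a neighbor in $N(v) \subseteq R - \{v\}$, we have $v \in N[R-\{v\}]$, so $v \notin pn[v,R]$. If $u \in N(v)$, then $u \in R - \{v\}$ itself, so trivially $u \in N[R-\{v\}]$ and $u \notin pn[v,R]$. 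Hence $pn[v,R] = \emptyset$, which together with $v$ not being isolated in $G[R]$ shows that $R$ is not an irredundant set. This contradicts that $\mathcal{C}$ is an $IRC$-coloring, completing the proof.

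I expect no serious obstacle here: the only subtlety is making sure the committee $R$ can be chosen to actually include \emph{all} of $N(v)$, which is exactly the content of the hypothesis that the neighbors have pairwise distinct colors. Once $N(v) \subseteq R$ is arranged, the non-irredundance of $R$ at $v$ is immediate from the definition of private neighbor.
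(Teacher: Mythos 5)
Your proposal is correct and follows essentially the same route as the paper: assume some $v$ has a rainbow neighborhood, build a rainbow committee containing $N[v]$, and observe that $v$ then has empty private neighbor set, so the committee is not irredundant. You simply spell out the details (the existence of such a committee via properness of $\mathcal{C}$, and the explicit check that $pn[v,R]=\emptyset$) that the paper leaves implicit in its one-line argument.
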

\begin{proof}
Suppose there exists a vertex $v$ such that all the vertices in $N(v)$ receive different colors in an $IRC$-coloring $\mathcal{C}$. Then any $RC$ containing the set $N[v]$ is not an irredundant set which leads to a contradiction. \qed
\end{proof}
\vskip 5 pt

\indent Proposition \ref{p1} implies the following corollary.
\vskip 5 pt

\begin{cor}
All the cycles of odd order are not $IRC$-colorable.
\end{cor}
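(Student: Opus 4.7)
The plan is to derive a contradiction from Proposition \ref{p1}. Suppose $C_n$ is an odd cycle with $n \geq 3$, vertex set $\{v_1, v_2, \ldots, v_n\}$ and edges $v_i v_{i+1}$ with indices taken modulo $n$, and suppose toward contradiction that $C_n$ admits an $IRC$-coloring $\mathcal{C}$. Since every vertex of $C_n$ has degree exactly $2$, Proposition \ref{p1} forces, for each $i$, the two neighbors $v_{i-1}$ and $v_{i+1}$ of $v_i$ to share a color, i.e. $\text{col}(v_{i-1}) = \text{col}(v_{i+1})$ for every $i$ (indices mod $n$).

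Iterating this relation starting from $v_1$, we obtain $\text{col}(v_1) = \text{col}(v_3) = \text{col}(v_5) = \cdots$. Because $n$ is odd, walking in steps of two around the cycle from $v_1$ eventually reaches $v_n$ (the indices $1, 3, 5, \ldots, n$ are exactly the odd positions, and $n$ itself is odd). Therefore $\text{col}(v_1) = \text{col}(v_n)$. But $v_1 v_n \in E(C_n)$, which contradicts $\mathcal{C}$ being a proper coloring. Hence no $IRC$-coloring of $C_n$ can exist.

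The only step requiring any care is the iteration that propagates the equal-color condition around the cycle: the contradiction fundamentally uses the parity of $n$, since for even $n$ the same propagation merely reproduces the standard proper $2$-coloring of $C_n$ and yields no conflict. I do not anticipate any genuine obstacle; this is a direct application of Proposition \ref{p1} combined with the fact that odd cycles are not bipartite.
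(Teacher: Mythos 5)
Your proof is correct and is exactly the argument the paper intends: the paper leaves the corollary unproved, stating only that it follows from Proposition \ref{p1}, and your derivation (both neighbors of each degree-$2$ vertex must share a color, which propagates around an odd cycle to force two adjacent vertices to share a color, contradicting properness) is the natural way to fill that in.
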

\begin{prop}\label{p2}
	Let $Q$ be a clique in $G$. If there exists a vertex $v$ in $Q$ such that $pn[v,Q]=\emptyset$, then $G$ is not $IRC$-colorable.
\end{prop}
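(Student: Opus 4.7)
The plan is to assume for contradiction that $G$ admits an $IRC$-coloring $\mathcal{C}$ with color classes $V_{1}, V_{2}, \ldots, V_{k}$, and then exhibit a rainbow committee containing $Q$ on which $v$ fails to have a private neighbor.

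First, I would use the fact that $Q$ is a clique in $G$. Since any proper coloring assigns distinct colors to pairwise adjacent vertices, the vertices of $Q$ must all receive different colors in $\mathcal{C}$. Say $Q = \{q_{1}, q_{2}, \ldots, q_{m}\}$ with $\mathrm{col}(q_{i}) = i$. For each color $j$ with $m + 1 \leq j \leq k$, pick an arbitrary vertex $w_{j} \in V_{j}$, and form the rainbow committee
$$R = \{q_{1}, q_{2}, \ldots, q_{m}\} \cup \{w_{m+1}, \ldots, w_{k}\}.$$
By construction $R$ is an $RC$ and $Q \subseteq R$.

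Next, I would compare the private neighbor sets of $v$ with respect to $Q$ and with respect to $R$. Since $Q \setminus \{v\} \subseteq R \setminus \{v\}$, we have $N[Q \setminus \{v\}] \subseteq N[R \setminus \{v\}]$, and consequently
$$pn[v, R] \;=\; N[v] - N[R \setminus \{v\}] \;\subseteq\; N[v] - N[Q \setminus \{v\}] \;=\; pn[v, Q] \;=\; \emptyset.$$
Hence $v$ has no private neighbor with respect to $R$, so $R$ is not an irredundant set of $G$. This contradicts the assumption that $\mathcal{C}$ is an $IRC$-coloring, since every $RC$ of an $IRC$-coloring must be irredundant. Therefore $G$ is not $IRC$-colorable.

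There is no real obstacle here beyond observing that a clique forces distinct colors (so $Q$ is automatically colorful in every proper coloring) and that the private neighbor set is monotone-decreasing under superset containment of the ambient set. Both facts are immediate from the definitions, and the proof essentially writes itself once one constructs the specific $RC$ extending $Q$.
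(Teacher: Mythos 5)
Your proof is correct and follows essentially the same route as the paper: observe that a clique is automatically colorful in any proper coloring, extend $Q$ to a rainbow committee, and use the monotonicity $pn[v,R] \subseteq pn[v,Q] = \emptyset$ to conclude that this $RC$ is not irredundant. You merely make explicit the monotonicity step that the paper's (very terse) proof leaves implicit.
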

\begin{proof}
	Since $Q$ is a clique, vertices of $Q$ receive unique colors. Suppose there exists a vertex $v$ in $Q$ such that $pn[v,Q]=\emptyset$, then any $RC$ containing $Q$ is not an irredundant set. Therefore $G$ is not $IRC$-colorable. \qed
\end{proof}
\vskip 5 pt

\begin{cor}
	Split graphs $G$ with minimum degree at least two are not $IRC$-colorable.
\end{cor}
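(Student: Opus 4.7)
The plan is to apply Proposition \ref{p2} directly with the clique taken to be the clique part of the split partition. Recall that a split graph $G$ has a vertex partition $V(G) = K \cup I$, where $K$ induces a clique and $I$ is an independent set. The goal is to exhibit a vertex $v \in K$ with $pn[v, K] = \emptyset$, and then Proposition \ref{p2} immediately yields that $G$ is not $IRC$-colorable.

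First I would argue that $|K| \geq 2$. Every vertex $u \in I$ has all its neighbors in $K$ (since $I$ is independent), so $\deg(u) \leq |K|$; the hypothesis $\delta(G) \geq 2$ forces $|K| \geq 2$ (and rules out the degenerate case $K = \emptyset$). In particular $K$ is a non-trivial clique of $G$, so Proposition \ref{p2} is applicable to $Q = K$.

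Next I would unpack what $pn[v, K]$ looks like for $v \in K$. The vertex $v$ itself lies in $N[K - \{v\}]$ because $v$ is adjacent to every other vertex of the clique, so $v \notin pn[v, K]$. Any vertex $u \in K - \{v\}$ trivially lies in $N[K - \{v\}]$, so it is excluded as well. Hence $pn[v, K] \subseteq I$, and a vertex $u \in I$ belongs to $pn[v, K]$ iff $u$ is adjacent to $v$ but to no other vertex of $K$.

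Now I would argue by contradiction: suppose $pn[v, K] \neq \emptyset$ for every $v \in K$. Then for each $v \in K$ I can pick $u_v \in I$ whose only neighbor in $K$ is $v$. Since $u_v \in I$, its neighbors lie entirely in $K$, so $\deg(u_v) = 1$, contradicting $\delta(G) \geq 2$. Therefore some $v \in K$ satisfies $pn[v, K] = \emptyset$, and Proposition \ref{p2} finishes the proof. There is no serious obstacle here — the main step is just translating the structural assumption ($\delta \geq 2$) into the nonexistence of a private neighbor within the clique side of the split partition.
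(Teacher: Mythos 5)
Your proposal is correct and follows essentially the same route as the paper: both apply Proposition \ref{p2} to the clique side of the graph (the paper uses a maximum clique, you use the clique part $K$ of the split partition) and conclude non-$IRC$-colorability from the existence of a clique vertex with empty private neighborhood. The only difference is that you explicitly justify $pn[v,K]=\emptyset$ via the degree condition on vertices of $I$, a step the paper dismisses with ``clearly,'' so your write-up is, if anything, more complete.
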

\begin{proof}
	Let $Q$ be a clique of maximum order in $G$. Let $v \in V(Q)$. Clearly $pn[v,Q]=\emptyset$. By Proposition \ref{p2} $G$ is not $IRC$-colorable. \qed
\end{proof}
\vskip 15 pt

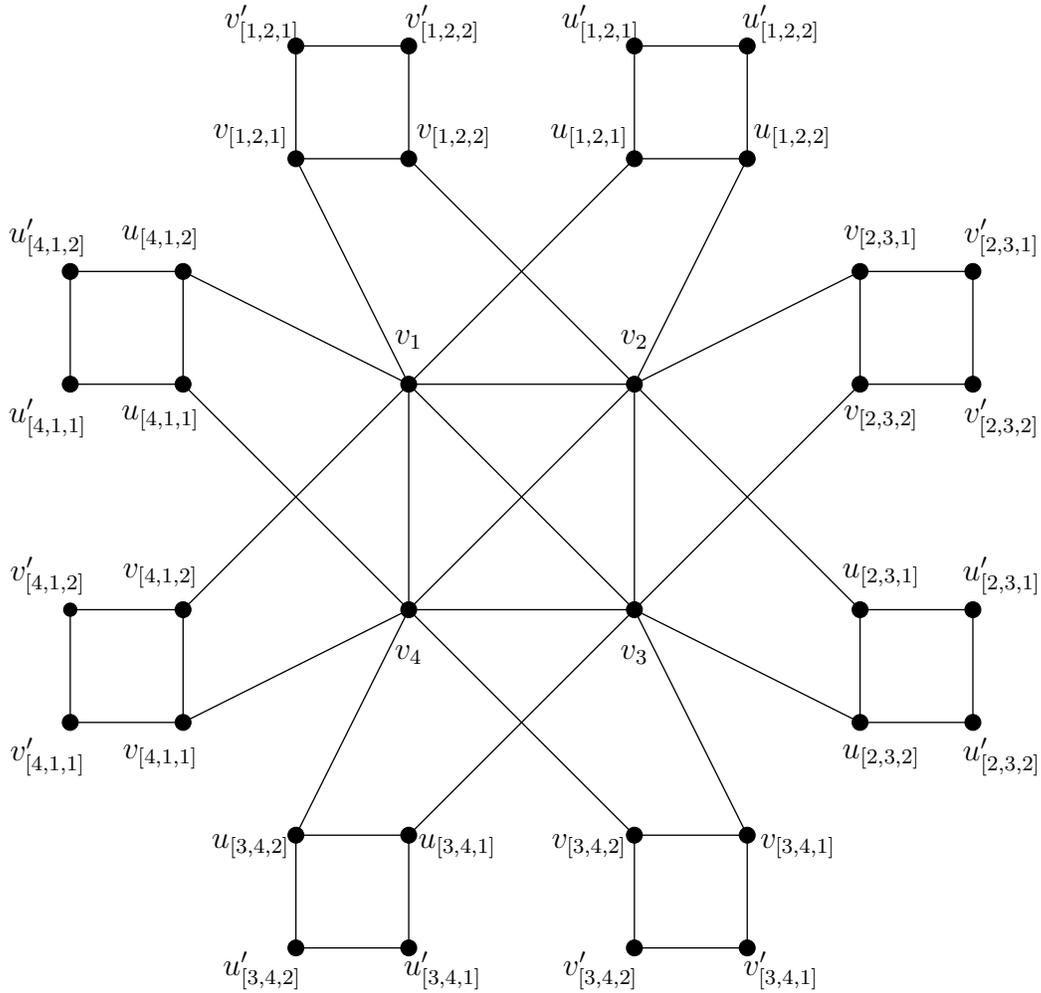
\begin{figure}[h!]
	\begin{center}
		\begin{tikzpicture}[line cap=round,line join=round,>=triangle 45,x=1.5cm,y=1.5cm]
			\clip(-5.207036668127865,-4.707583213550753) rectangle (5,5);
			\draw [line width=0.5pt] (-1,1)-- (1,1);
			\draw [line width=0.5pt] (1,1)-- (1,-1);
			\draw [line width=0.5pt] (1,-1)-- (-1,-1);
			\draw [line width=0.5pt] (-1,-1)-- (-1,1);
			\draw [line width=0.5pt] (-1,1)-- (1,-1);
			\draw [line width=0.5pt] (1,1)-- (-1,-1);
			\draw [line width=0.5pt] (-2,-3)-- (-2,-4);
			\draw [line width=0.5pt] (-2,-4)-- (-1,-4);
			\draw [line width=0.5pt] (-1,-4)-- (-1,-3);
			\draw [line width=0.5pt] (-1,-3)-- (-2,-3);
			\draw [line width=0.5pt] (1,-3)-- (1,-4);
			\draw [line width=0.5pt] (1,-4)-- (2,-4);
			\draw [line width=0.5pt] (2,-4)-- (2,-3);
			\draw [line width=0.5pt] (2,-3)-- (1,-3);
			\draw [line width=0.5pt] (-1,-1)-- (-2,-3);
			\draw [line width=0.5pt] (1,-1)-- (-1,-3);
			\draw [line width=0.5pt] (1,-1)-- (2,-3);
			\draw [line width=0.5pt] (-1,-1)-- (1,-3);
			\draw [line width=0.5pt] (3,2)-- (4,2);
			\draw [line width=0.5pt] (4,2)-- (4,1);
			\draw [line width=0.5pt] (4,1)-- (3,1);
			\draw [line width=0.5pt] (3,2)-- (3,1);
			\draw [line width=0.5pt] (3,-1)-- (3,-2);
			\draw [line width=0.5pt] (3,-2)-- (4,-2);
			\draw [line width=0.5pt] (4,-2)-- (4,-1);
			\draw [line width=0.5pt] (4,-1)-- (3,-1);
			\draw [line width=0.5pt] (3,2)-- (1,1);
			\draw [line width=0.5pt] (3,1)-- (1,-1);
			\draw [line width=0.5pt] (3,-1)-- (1,1);
			\draw [line width=0.5pt] (3,-2)-- (1,-1);
			\draw [line width=0.5pt] (-4,2)-- (-4,1);
			\draw [line width=0.5pt] (-4,1)-- (-3,1);
			\draw [line width=0.5pt] (-3,1)-- (-3,2);
			\draw [line width=0.5pt] (-3,2)-- (-4,2);
			\draw [line width=0.5pt] (-4,-1)-- (-4,-2);
			\draw [line width=0.5pt] (-4,-2)-- (-3,-2);
			\draw [line width=0.5pt] (-3,-2)-- (-3,-1);
			\draw [line width=0.5pt] (-3,-1)-- (-4,-1);
			\draw [line width=0.5pt] (-3,2)-- (-1,1);
			\draw [line width=0.5pt] (-3,1)-- (-1,-1);
			\draw [line width=0.5pt] (-1,1)-- (-3,-1);
			\draw [line width=0.5pt] (-1,-1)-- (-3,-2);
			\draw [line width=0.5pt] (-2,4)-- (-2,3);
			\draw [line width=0.5pt] (-2,3)-- (-1,3);
			\draw [line width=0.5pt] (-1,3)-- (-1,4);
			\draw [line width=0.5pt] (-1,4)-- (-2,4);
			\draw [line width=0.5pt] (1,4)-- (1,3);
			\draw [line width=0.5pt] (1,3)-- (2,3);
			\draw [line width=0.5pt] (2,3)-- (2,4);
			\draw [line width=0.5pt] (2,4)-- (1,4);
			\draw [line width=0.5pt] (-2,3)-- (-1,1);
			\draw [line width=0.5pt] (-1,3)-- (1,1);
			\draw [line width=0.5pt] (1,3)-- (-1,1);
			\draw [line width=0.5pt] (2,3)-- (1,1);
			\begin{scriptsize}
				\draw [fill=black] (-1,1) circle (3pt);
				\draw[color=black] (-1,1.4) node {\large{$v_{1}$}};
				\draw [fill=black] (1,1) circle (3pt);
				\draw[color=black] (1,1.4) node {\large{$v_2$}};
				\draw [fill=black] (1,-1) circle (3pt);
				\draw[color=black] (1,-1.4) node {\large{$v_3$}};
				\draw [fill=black] (-1,-1) circle (3pt);
				\draw[color=black] (-1,-1.4) node {\large{$v_4$}};
				\draw [fill=black] (-2,4) circle (3pt);
				\draw[color=black] (-2.3,4.2) node {\large{$v_{[1,2,1]}^{\prime}$}};
				\draw [fill=black] (-1,4) circle (3pt);
				\draw[color=black] (-0.7,4.2) node {\large{$v_{[1,2,2]}^{\prime}$}};
				\draw [fill=black] (-1,3) circle (3pt);
				\draw[color=black] (-0.6,3.2) node {\large{$v_{[1,2,2]}$}};
				\draw [fill=black] (-2,3) circle (3pt);
				\draw[color=black] (-2.4,3.2) node {\large{$v_{[1,2,1]}$}};
				\draw [fill=black] (1,4) circle (3pt);
				\draw[color=black] (0.7,4.2) node {\large{$u_{[1,2,1]}^{\prime}$}};
				\draw [fill=black] (1,3) circle (3pt);
				\draw[color=black] (0.6,3.2) node {\large{$u_{[1,2,1]}$}};
				\draw [fill=black] (2,4) circle (3pt);
				\draw[color=black] (2.3,4.2) node {\large{$u_{[1,2,2]}^{\prime}$}};
				\draw [fill=black] (2,3) circle (3pt);
				\draw[color=black] (2.4,3.2) node {\large{$u_{[1,2,2]}$}};
				\draw [fill=black] (3,1) circle (3pt);
				\draw[color=black] (3.1873446971559383,0.7) node {\large{$v_{[2,3,2]}$}};
				\draw [fill=black] (4,1) circle (3pt);
				\draw[color=black] (4.255609013653294,0.7) node {\large{$v_{[2,3,2]}^{\prime}$}};
				\draw [fill=black] (3,2) circle (3pt);
				\draw[color=black] (3.1873446971559383,2.3) node {\large{$v_{[2,3,1]}$}};
				\draw [fill=black] (4,2) circle (3pt);
				\draw[color=black] (4.255609013653294,2.3) node {\large{$v_{[2,3,1]}^{\prime}$}};
				\draw [fill=black] (3,-1) circle (3pt);
				\draw[color=black] (3.1873446971559383,-0.7) node {\large{$u_{[2,3,1]}$}};
				\draw [fill=black] (4,-1) circle (3pt);
				\draw[color=black] (4.255609013653294,-0.7) node {\large{$u_{[2,3,1]}^{\prime}$}};
				\draw [fill=black] (3,-2) circle (3pt);
				\draw[color=black] (3.1873446971559383,-2.3) node {\large{$u_{[2,3,2]}$}};
				\draw [fill=black] (4,-2) circle (3pt);
				\draw[color=black] (4.255609013653294,-2.3) node {\large{$u_{[2,3,2]}^{\prime}$}};
				\draw [fill=black] (1,-3) circle (3pt);
				\draw[color=black] (0.6,-3.1) node {\large{$v_{[3,4,2]}$}};
				\draw [fill=black] (2,-3) circle (3pt);
				\draw[color=black] (2.45,-3.1) node {\large{$v_{[3,4,1]}$}};
				\draw [fill=black] (2,-4) circle (3pt);
				\draw[color=black] (2.3,-4.2) node {\large{$v_{[3,4,1]}^{\prime}$}};
				\draw [fill=black] (1,-4) circle (3pt);
				\draw[color=black] (0.7,-4.2) node {\large{$v_{[3,4,2]}^{\prime}$}};
				\draw [fill=black] (-1,-3) circle (3pt);
				\draw[color=black] (-0.57,-3.1) node {\large{$u_{[3,4,1]}$}};
				\draw [fill=black] (-1,-4) circle (3pt);
				\draw[color=black] (-0.7,-4.2) node {\large{$u_{[3,4,1]}^{\prime}$}};
				\draw [fill=black] (-2,-3) circle (3pt);
				\draw[color=black] (-2.4,-3.1) node {\large{$u_{[3,4,2]}$}};
				\draw [fill=black] (-2,-4) circle (3pt);
				\draw[color=black] (-2.3,-4.2) node {\large{$u_{[3,4,2]}^{\prime}$}};
				\draw [fill=black] (-3,-1) circle (3pt);
				\draw[color=black] (-3.2,-0.7) node {\large{$v_{[4,1,2]}$}};
				\draw [fill=black] (-4,-1) circle (2.5pt);
				\draw[color=black] (-4.2,-0.7) node {\large{$v_{[4,1,2]}^{\prime}$}};
				\draw [fill=black] (-4,-2) circle (3pt);
				\draw[color=black] (-4.2,-2.3) node {\large{$v_{[4,1,1]}^{\prime}$}};
				\draw [fill=black] (-3,-2) circle (3pt);
				\draw[color=black] (-3.2,-2.3) node {\large{$v_{[4,1,1]}$}};
				\draw [fill=black] (-3,1) circle (3pt);
				\draw[color=black] (-3.2,0.7) node {\large{$u_{[4,1,1]}$}};
				\draw [fill=black] (-4,1) circle (3pt);
				\draw[color=black] (-4.2,0.7) node {\large{$u_{[4,1,1]}^{\prime}$}};
				\draw [fill=black] (-4,2) circle (3pt);
				\draw[color=black] (-4.2,2.3) node {\large{$u_{[4,1,2]}^{\prime}$}};
				\draw [fill=black] (-3,2) circle (3pt);
				\draw[color=black] (-3.2,2.3) node {\large{$u_{[4,1,2]}$}};
			\end{scriptsize}
		\end{tikzpicture}
		\caption{ The graph $\tilde{G}(4)$ in the class $\mathcal{G}(4)$}
		\label{example1}
	\end{center}
\end{figure}
\vskip 15 pt

\indent Although not every graph posses an $IRC$-coloring, we can establish the existence of graphs $G$ when $\chi_{irc}(G)$ is given.
\vskip 5 pt

\noindent For any natural number $k \geq 2$, we let
\vskip 5 pt

\indent $\mathcal{G}(k)$ be the family of graphs $G$ in which $G$ is $IRC$-colorable and $\chi_{irc}(G) = k$.
\vskip 5 pt

\noindent The following theorem shows that the family $\mathcal{G}$ is non-empty for all $k$.
\vskip 5 pt

\begin{them}\label{them1}
For every natural number $k \geq 2$, we have that $\mathcal{G}(k) \neq \emptyset$.
\end{them}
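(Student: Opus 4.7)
The plan is a constructive existence proof. For every integer $k \geq 2$, I would exhibit a graph $\tilde{G}(k) \in \mathcal{G}(k)$ built on the same blueprint illustrated for $k=4$ in Figure \ref{example1}: take a central clique $K_k$ with vertex set $\{v_1, v_2, \ldots, v_k\}$ and, for each consecutive pair $(v_i, v_{i+1})$ (indices read modulo $k$), attach two four-cycle gadgets whose ``connector'' vertices are adjacent to $v_i$ and to $v_{i+1}$ respectively. The small case $k=2$ can be handled separately by taking $\tilde{G}(2) = C_4$. The role of the gadgets is twofold: they guarantee that every clique vertex has several external private neighbors (so that Propositions \ref{degreemore} and \ref{p2} do not forbid an $IRC$-coloring), and the rigidity of the $C_4$-structure restricts which colors can appear anywhere in the graph.

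First, I would exhibit an $IRC$-coloring of $\tilde{G}(k)$ that uses exactly $k$ colors, establishing $\chi_{irc}(\tilde{G}(k)) \geq k$. Assign colors $1, 2, \ldots, k$ bijectively to $v_1, v_2, \ldots, v_k$, and on each gadget attached to the edge $v_iv_{i+1}$ use only the two colors $i$ and $i+1$ on its bipartition classes; this yields a proper $k$-coloring. To confirm the $IRC$ property I would inspect an arbitrary rainbow committee and identify, for every committee member, a vertex in its closed neighborhood that lies outside the closed neighborhood of the remaining members: for a central committee member $v_j$ the witness is a gadget connector attached to $v_j$ whose in-gadget opposite receives a color not used elsewhere in the committee, and for a gadget committee member the witness is its opposite partner in the same $C_4$.

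The main obstacle is the matching upper bound $\chi_{irc}(\tilde{G}(k)) \leq k$. Suppose for contradiction that some $IRC$-coloring $\mathcal{C}$ uses at least $k+1$ colors. Since $v_1, \ldots, v_k$ form a $K_k$, they already occupy $k$ pairwise distinct color classes, so any additional color class must be represented by some gadget vertex $w$. I would then construct a specific rainbow committee $R_w$ containing $w$, the clique vertices $v_1, \ldots, v_k$, and suitably chosen representatives of the remaining color classes, and show that the combinatorial role of $w$ inside its $C_4$-gadget forces either $w$ itself or the clique vertex to which its gadget is attached to lose every private neighbor in $R_w$. The delicate step is to pick the representatives of the remaining color classes so that all candidate private neighbors of the attacked vertex are swept into the closed neighborhoods of the other committee members; the symmetry between the two gadgets on each edge, and between the $k$ edges of the cyclic attachment pattern, makes this selection always possible. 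This contradiction yields $\chi_{irc}(\tilde{G}(k)) = k$, hence $\tilde{G}(k) \in \mathcal{G}(k)$ and the theorem follows.
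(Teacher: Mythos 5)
Your proposal is correct in outline and follows essentially the same route as the paper: the identical construction (a central $K_k$ with two $C_4$-gadgets per consecutive pair $v_iv_{i+1}$, and $C_4$ for $k=2$), the same explicit $k$-coloring for the lower bound, and the same contradiction strategy for the upper bound, namely locating the $(k+1)$-st color on a gadget vertex and exhibiting a rainbow committee in which that vertex has no private neighbor. The only detail you defer that the paper makes concrete is the mechanism forcing the bad committee: the paper applies Proposition \ref{p1} to the degree-two gadget vertex $v_{[1,2,1]}'$ to pin down the colors of its two neighbors, after which the committee $\{v_{[1,2,1]}', v_{[1,2,1]}, v_2, \ldots\}$ immediately strips $v_{[1,2,1]}$ of all private neighbors.
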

\begin{proof}
When $k=2$, we have $\chi_{irc}(C_4)=2$ implying that $C_{4} \in \mathcal{G}(k)$. We now proceed the case when $k \geq 3$. Let $K_k$ be the complete graph with vertex set $\{v_1, v_2, \ldots, v_k\}$. The graph $\tilde{G}(k)$ is constructed from $K_k$ and the vertices $v_{[i,i+1,j]}, u_{[i,i+1,j]}, v_{[i,i+1,j]}^{\prime}, u_{[i,i+1,j]}^{\prime}$ for all $1 \leq i \leq k$ and $1 \leq j \leq 2$ by adding edges as follows:
\begin{itemize}
  \item Add the edges  $v_{[i,i+1,1]}v_{[i,i+1,2]}$, $v_{[i,i+1,1]}^{\prime} v_{[i,i+1,2]}^{\prime}$, $v_{[i,i+1,1]} v_{[i,i+1,1]}^{\prime}$, $v_{[i,i+1,2]}v_{[i,i+1,2]}^{\prime}$.
  \item Add the edges  $u_{[i,i+1,1]}u_{[i,i+1,2]}$, $u_{[i,i+1,1]}^{\prime} u_{[i,i+1,2]}^{\prime}$, $u_{[i,i+1,1]} u_{[i,i+1,1]}^{\prime}$, $u_{[i,i+1,2]}u_{[i,i+1,2]}^{\prime}$.
  \item Join the vertex $v_i$ to $v_{[i,i+1,1]}$ and $u_{[i,i+1,1]}$ and join the vertex $v_{i+1}$ to $v_{[i,i+1,2]}$ and $u_{[i,i+1,2]}$.
\end{itemize}
We note that $v_{i}v_{j} \in E(\tilde{G}(k))$ for every $1 \leq i,j \leq k$ because $\tilde{G}(k)$ is constructed from $K_{k}$. We note also that if $i=k$, then $i+1=1$. The construction of $G$ for $k=4$ is illustrated in Figure \ref{example1}. Color the vertices of $\tilde{G}(k)$ in such a way that the vertices $v_i$ are given unique colors such that $col(v_i)=i$. Let $col(v_{[i,i+1,1]})=col(u_{[i,i+1,1]})=col(v_{[i,i+1,2]}^{\prime})=col(u_{[i,i+1,2]}^{\prime})=i+1$ and $col(v_{[i,i+1,2]})=col(u_{[i,i+1,2]})=col(v_{[i,i+1,1]}^{\prime})=col(u_{[i,i+1,1]}^{\prime})=i$. The coloring described with $k$ number of colors is an $IRC$-coloring $\tilde{G}(k)$ and hence $\chi_{irc}(\tilde{G}(k)) \geq k$. \\
\vskip 5 pt

\indent We will show that it is not possible to color with at least $k+1$ number of colors that admit $IRC$-coloring of $\tilde{G}(k)$. Assume to the contrary that there exists an $IRC$-coloring $\mathcal{C}$ of $\tilde{G}(k)$ with at least $k+1$ colors. Renaming the vertices if necessary, we assume that $col(v_i)=i$, for $1 \leq i \leq k$. Thus, by symmetry, we let $col(v_{[1,2,1]})=k+1$. By Preposition \ref{p1}, $col(v_{[1,2,2]}^{\prime})=k+1$ because the only neighbors of $v_{[1,2,1]}^{\prime}$ are $v_{[1,2,1]}$ and $v_{[1,2,2]}^{\prime}$. Since $v_{[1,2,2]}v_2 \in E(\tilde{G}(k))$, $col(v_{[1,2,2]})=j \notin \{2,k+1\}$. Again by Proposition \ref{p1}, $col(v_{[1,2,2]})=col(v_{[1,2,1]}^{\prime})=j \notin \{2,k+1\}$. Clearly the $RC$ containing $v_{[1,2,1]}^{\prime}, v_{[1,2,1]}, v_2$ is not an irredundant set of $\tilde{G}(k)$ as $v_{[1,2,1]}$ has no private neighbor, contradicting that $\mathcal{C}$ is an $IRC$-coloring of $\tilde{G}(k)$. Thus $\chi_{irc}(G) \leq k$.
\vskip 5 pt

\indent Therefore, $\chi_{irc}(\tilde{G}(k)) = k$ implying that $\tilde{G}(k) \in \mathcal{G}(k)$. This completes the proof.\qed
\end{proof}
\vskip 15 pt

\begin{figure}[h!]
	\begin{center}
		\begin{tikzpicture}[line cap=round,line join=round,>=triangle 45,x=1cm,y=1cm]
			\clip(-2.3,-0.5) rectangle (5,5);
			\draw [line width=0.5pt] (0,0)-- (3,0);
			\draw [line width=0.5pt] (3,1)-- (0,0);
			\draw [line width=0.5pt] (3,2)-- (0,0);
			\draw [line width=0.5pt] (0,1)-- (3,4);
			\draw [line width=0.5pt] (3,3)-- (0,1);
			\draw [line width=0.5pt] (3,2)-- (0,1);
			\draw [line width=0.5pt] (0,1)-- (3,0);
			\draw [line width=0.5pt] (0,2)-- (3,4);
			\draw [line width=0.5pt] (3,3)-- (0,2);
			\draw [line width=0.5pt] (0,2)-- (3,1);
			\draw [line width=0.5pt] (0,2)-- (3,0);
			\draw [line width=0.5pt] (0,3)-- (3,4);
			\draw [line width=0.5pt] (0,3)-- (3,3);
			\draw [line width=0.5pt] (0,3)-- (3,2);
			\draw [line width=0.5pt] (0,3)-- (3,1);
			\begin{scriptsize}
				\draw [fill=black] (0,0) circle (3pt);
				\draw[color=black] (-0.5,0) node {\large{$v^{*}$}};
				\draw [fill=black] (0,1) circle (3pt);
				\draw [fill=black] (0,2) circle (3pt);
				\draw [fill=black] (0,3) circle (3pt);
				\draw [fill=black] (3,0) circle (3pt);
				\draw [fill=black] (3,1) circle (3pt);
				\draw [fill=black] (3,2) circle (3pt);
				\draw [fill=black] (3,3) circle (3pt);
				\draw [fill=black] (3,4) circle (3pt);
			\end{scriptsize}
		\end{tikzpicture}
		\caption{ The graph $G$ belonging to the family $\mathcal{H}$ }
		\label{eexample2}
	\end{center}
	
\end{figure}
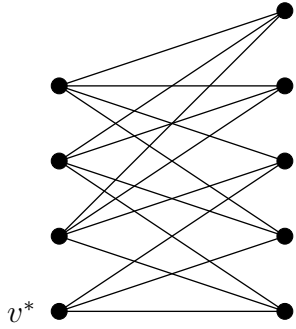
\vskip 15 pt

\indent In our study, we also establish some results of $IRC$-coloring when the graph is bipartite. We define a family $\mathcal{H}$ of bipartite graphs as follows. A bipartite graph $G$ with partite sets $V_1$ and $V_2$ belongs to the family $\mathcal{H}$ if there exists a vertex $v^*$ belonging to either $V_1$ or $V_2$ (say $v^* \in V_1$) satisfying the following property:
\vskip 5 pt

\indent \emph{(*) for every $v \in V_1 - \{v^*\}$, both $v$ and $v^*$ have at least two external private neighbors with respect to the set $\{v,v^*\}$.}
\vskip 5 pt

\noindent An example of a graph in $\mathcal{H}$ is shown in Figure \ref{eexample2}. We prove that all graphs $G$ in this class satisfy $\chi_{irc}(G)\geq 3$.
\vskip 5 pt

\begin{prop}
	If $G \in \mathcal{H}$, then $\chi_{irc}(G)\geq 3$.
\end{prop}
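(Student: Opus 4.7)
The plan is to exhibit an $IRC$-coloring of $G$ that uses exactly three colors; since $\chi_{irc}(G)$ is the \emph{maximum} number of colors taken over all $IRC$-colorings of $G$, producing any such coloring with three colors immediately yields $\chi_{irc}(G)\ge 3$. The candidate I would propose is
$\mathcal{C}=(V_1\setminus\{v^*\},\,V_2,\,\{v^*\})$.
Because $V_1$ and $V_2$ are independent in the bipartite graph $G$, each of the three color classes is independent, so $\mathcal{C}$ is a proper $3$-coloring.

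The bulk of the argument is checking that every rainbow committee of $\mathcal{C}$ is irredundant. Any such committee has the form $R=\{v,u,v^*\}$ with $v\in V_1\setminus\{v^*\}$ and $u\in V_2$, and I must produce a private neighbor of each member of $R$. For $v$, property $(*)$ supplies two distinct external private neighbors of $v$ with respect to $\{v,v^*\}$, both lying in $V_2$; at most one of them can coincide with $u$, and any other such $y$ satisfies $y\in N(v)$, $y\notin N[v^*]$, and $y\notin N[u]$ (the last because $y$ and $u$ are distinct vertices of the independent set $V_2$), so $y\in pn[v,R]$. A symmetric argument applied to $v^*$, using its two external private neighbors with respect to $\{v,v^*\}$ that $(*)$ supplies, produces some $z\in pn[v^*,R]$. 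For $u$: if $u$ is non-adjacent to both $v$ and $v^*$, then $u$ is isolated in $G[R]$ and is its own private neighbor; otherwise any neighbor $w\in N(u)\setminus\{v,v^*\}$ lies in $V_1$ and, by bipartiteness, is non-adjacent to $v$ and $v^*$, so that $w\in pn[u,R]$.

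The step I expect to be the main obstacle is supplying the neighbor $w$ in the sub-case where $u$ is a common neighbor of both $v$ and $v^*$, because the minimum-degree consequence $\delta(G)\ge 2$ obtained from Proposition \ref{degreemore} alone does not exclude the degenerate configuration $N(u)=\{v,v^*\}$. I would resolve this either by combining $\delta(G)\ge 2$ with property $(*)$ to rule out such a $u$ on structural grounds, or by modifying $\mathcal{C}$ locally—absorbing any offending common neighbor into the third class through a non-adjacency to $v^*$ that follows from $(*)$. Once this last sub-case is disposed of, every rainbow committee of the (possibly tweaked) $3$-coloring is irredundant, hence the coloring is an $IRC$-coloring with three colors, and $\chi_{irc}(G)\ge 3$ follows.
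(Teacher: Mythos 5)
Your coloring is exactly the paper's (the paper writes it as $\mathcal{C}=(\{v^*\},V_1-\{v^*\},V_2)$ and merely asserts that ``it can be seen'' to be an $IRC$-coloring), and your verification of $v$, of $v^*$, and of any $u\in V_2$ having a neighbour outside $\{v,v^*\}$ is correct. But the sub-case you flag is a genuine, unfixable-as-proposed gap. Property $(*)$ only constrains the pairs $\{v,v^*\}$ with $v\in V_1$; it says nothing about a vertex of $V_2$ whose neighbourhood is exactly $\{v,v^*\}$, and together with $\delta(G)\ge 2$ it does not exclude one. Concretely, take $V_1=\{v^*,v,w\}$, $V_2=\{a,b,c,d,u,e\}$ with $N(v^*)=\{a,b,u,e\}$, $N(v)=\{c,d,u,e\}$, $N(w)=\{a,b,c,d\}$. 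Every vertex has degree at least $2$, and for the pair $\{v,v^*\}$ the external private neighbours are $\{c,d\}$ and $\{a,b\}$, while for $\{w,v^*\}$ they are $\{c,d\}$ and $\{u,e\}$; so $G\in\mathcal{H}$. Yet the rainbow committee $\{v^*,v,u\}$ of the proposed coloring satisfies $pn[u,\{v^*,v,u\}]=N[u]-N[\{v^*,v\}]=\emptyset$, so the coloring is not an $IRC$-coloring. Your first repair therefore fails, and your second cannot be carried out as stated: the offending $u$ is by definition adjacent to $v^*$, so it cannot be absorbed into the class $\{v^*\}$, nor into $V_1\setminus\{v^*\}$ since it is also adjacent to $v$.

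You should also be aware that the hole is not yours alone: the paper's one-line proof breaks on the identical configuration, and in the example above the problem appears unrepairable by any choice of coloring, since every proper coloring that places $v^*$, $v$ and $u$ in three distinct classes produces the same non-irredundant committee, while the degree-$2$ vertices $a$ and $c$ force analogous identifications; one can check that $(V_1,V_2)$ is an $IRC$-coloring of this graph but that no $IRC$-coloring with three classes exists, so the proposition itself fails for it. The statement can only be saved by strengthening the hypothesis --- for instance by additionally requiring that no vertex of $V_2$ has its neighbourhood contained in $\{v,v^*\}$ for any $v\in V_1\setminus\{v^*\}$ (a condition that does hold for the graph in Figure~\ref{eexample2}) --- at which point your case analysis goes through verbatim and is a complete proof.
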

\begin{proof}
	Let $G$ be a  bipartite with partite sets $V_1$, $V_2$. Since $G \in \mathcal{H}$, there exists a vertex $v^*$, say $v^* \in V_1$ satisfying the Property $(*)$ of the graph family $\mathcal{H}$. It can be seen that the coloring $\mathcal{C}=(\{v^*\}, V_{1}-\{v^*\}, V_2)$ is an $IRC$-coloring of $G$ and hence $\chi_{irc}(G) \geq 3$.  \qed
\end{proof}

\indent Interestingly, we can always find $IRC$-coloring graphs with arbitrary large even $IRC$-number although the graphs are bipartite.
\vskip 5 pt

\noindent For an even number $k \geq 2$, we  let
\vskip 5 pt

\indent $\mathcal{R}(k)$ the class of bipartite graphs $G$ such that $\chi_{irc}(G) = k$.
\vskip 5 pt

\noindent The following proposition establish a construction of graphs that are in the class $\mathcal{R}(k)$
\vskip 5 pt

\begin{prop}\label{propp}
For an even number $k \geq 2$, we have that $\mathcal{R}(k) \neq \emptyset$.
\end{prop}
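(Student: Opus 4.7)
The plan is to construct, for every even $k \geq 2$, a connected bipartite graph $G_k$ of minimum degree at least $2$ with $\chi_{irc}(G_k) = k$, then verify both the existence of an $IRC$-coloring using $k$ colors and the non-existence of one using more. For $k = 2$ the $4$-cycle $G_2 = C_4$ already suffices, since $\chi_{irc}(C_4) = 2$ was established in the proof of Theorem~\ref{them1}. For even $k \geq 4$, I will adapt the construction of $\tilde{G}(k)$ from Theorem~\ref{them1} to a bipartite setting.

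The key observation is that the gadget system of $\tilde{G}(k)$, namely the eight vertices $v_{[i,i+1,j]}, u_{[i,i+1,j]}, v_{[i,i+1,j]}^{\prime}, u_{[i,i+1,j]}^{\prime}$ attached to each consecutive pair $v_{i}, v_{i+1}$, is itself bipartite; only the central $K_k$ prevents $\tilde{G}(k)$ from being bipartite. I will therefore replace $K_k$ by an even cycle $C_k = v_1 v_2 \cdots v_k v_1$ (bipartite since $k$ is even) and retain the same eight-vertex gadget on each consecutive edge $v_i v_{i+1}$, obtaining a bipartite candidate $\hat{G}_k$. Bipartiteness will be verified by taking the natural $2$-coloring of $C_k$ and matching the bipartition inside each gadget $C_4$ so that $v_i$ and $v_{i+1}$ (on opposite sides of $C_k$) are consistent with the side of their attachment vertices in the gadget.

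To prove $\chi_{irc}(\hat{G}_k) \geq k$, I will exhibit an $IRC$-coloring with exactly $k$ colors modelled on the coloring in Theorem~\ref{them1}: set $col(v_i) = i$ and use the same palindromic assignment on each gadget, with indices taken cyclically modulo $k$. Properness is immediate, and the case analysis showing that every rainbow committee is irredundant goes through verbatim, because the gadget structure (independent of whether $K_k$ or $C_k$ is used as backbone) supplies each of its attachment vertices with two external private neighbors. To prove $\chi_{irc}(\hat{G}_k) \leq k$, I will argue by contradiction: assume an $IRC$-coloring uses $k+1$ colors; Proposition~\ref{p1} forces every vertex to have two equally colored neighbors, severely restricting where a new $(k+1)$-st color can sit inside any gadget. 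Propagating these restrictions from the interior of a gadget to its attachment vertices on $C_k$ will produce a rainbow committee containing some gadget-vertex with empty private-neighbor set, contradicting the $IRC$ hypothesis.

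The main obstacle is the upper bound. In Theorem~\ref{them1}, the clique $K_k$ did the heavy lifting by immediately forcing the $k$ backbone vertices to carry pairwise distinct colors, which then anchored the entire contradiction argument. With $C_k$ in place of $K_k$, non-consecutive backbone vertices $v_i$ and $v_j$ are no longer adjacent and can \emph{a priori} share a color, so the pairwise distinctness (or whatever weaker statement still suffices for the contradiction) must be derived indirectly through the rigidity imposed by the gadgets together with Proposition~\ref{p1}. Carrying out this more delicate constraint-propagation step is the core technical work.
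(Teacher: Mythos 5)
Your construction is exactly the paper's: take $C_4$ for $k=2$, and for even $k \geq 4$ replace the central $K_k$ of $\tilde{G}(k)$ by the even cycle $C_k$ while keeping the eight-vertex gadgets; the paper calls this graph $G^*(k)$, observes it is bipartite, and transports the coloring of Theorem~\ref{them1} to get $\chi_{irc}(G^*(k)) \geq k$. Up to that point your proposal and the paper coincide step for step, including the reason the rainbow-committee check survives the change of backbone (each attachment vertex still has two external private neighbors supplied by its gadget).

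The issue is the upper bound, and here you have correctly identified a genuine gap --- one that you have not closed, but which, you should be aware, the paper does not close either. Membership in $\mathcal{R}(k)$ requires $\chi_{irc}(G^*(k)) = k$ exactly, yet the paper's proof stops at ``$\chi_{irc}(G^*(k)) \geq k$'' and simply asserts the conclusion. Your diagnosis of why the bound $\chi_{irc} \leq k$ is delicate is accurate: in Theorem~\ref{them1} the argument opens by renaming so that $col(v_i) = i$ for all $i$, which is free only because the backbone is a clique; with $C_k$ as backbone, non-consecutive $v_i$ and $v_j$ may share a color, so an $IRC$-coloring with $k+1$ colors need not place distinct colors on the backbone, and the contradiction via Proposition~\ref{p1} must be rebuilt from the gadget rigidity alone. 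You describe this constraint propagation as ``the core technical work'' but do not carry it out, so as written your proposal (like the paper's own proof) establishes only that there exist bipartite graphs with $\chi_{irc} \geq k$, not that $\mathcal{R}(k) \neq \emptyset$. To complete the argument you would need either to execute that propagation or to modify the construction so that some clique-free mechanism (for instance, the $C_4$ gadgets forcing each $v_i$'s two gadget-neighbors to repeat colors) pins down the total number of colors at $k$.
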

\begin{proof}
When $k = 2$, clearly, $C_{4} \in \mathcal{R}(2)$. We may assume that $k \geq 4$. We let $\tilde{G}(k)$ be the graph which was constructed in Theorem \ref{them1}. The graph $G^*(k)$ is obtained from $\tilde{G}(k)$ by replacing $K_k$ by a cycle $C_k$. It can be checked that he graph $G^*(k)$ is bipartite. Then, we color $G^*(k)$ the same way as that of $\tilde{G}(k)$ and this is also an $IRC$-coloring of $G^*(k)$. Therefore $\chi_{irc}(G^*(k))\geq k$.
\end{proof}
\vskip 5 pt

\indent Finally, we finish this section by some a sufficient condition related with dominator coloring and domination number to confirm the existence of an $IRC$-coloring of a graph.
\vskip 5 pt

\begin{prop}
	If $G$ is a graph with $\chi_{d}(G)= \gamma (G)$, then $G$ is $IRC$-colorable and $\chi_{irc}(G) \geq \gamma (G)$.
\end{prop}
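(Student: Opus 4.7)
The plan is to exhibit a particular coloring with $\gamma(G)$ colors and verify it is an $IRC$-coloring. Assume $\chi_{d}(G) = \gamma(G)$, and let $\mathcal{C} = (V_1, V_2, \ldots, V_{\gamma(G)})$ be a dominator coloring attaining $\chi_{d}(G)$. Since every rainbow committee has exactly $\gamma(G)$ vertices (one per color class), the verification of the $IRC$ property reduces to a single observation: every $RC$ of this coloring is an irredundant set.

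First I would recall the fact (noted in the excerpt, following \cite{IJ}) that in any dominator coloring, picking an arbitrary representative from each color class yields a dominating set. Hence every $RC$ of $\mathcal{C}$ is a dominating set of $G$ of cardinality $\gamma(G)$. Such a set is necessarily a \emph{minimum} dominating set, and every minimum dominating set is minimal (otherwise one could delete a vertex and obtain a smaller dominating set, contradicting the value of $\gamma(G)$). Therefore every $RC$ is a minimal dominating set of $G$.

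The next step is to invoke Proposition \ref{intro}, which tells us that every minimal dominating set of $G$ is a maximal irredundant set, and in particular is irredundant. Combining this with the previous paragraph shows that every $RC$ in the coloring $\mathcal{C}$ is an irredundant set of $G$. By definition, $\mathcal{C}$ is an $IRC$-coloring of $G$, which establishes that $G$ is $IRC$-colorable. Since $\mathcal{C}$ uses $\gamma(G)$ colors and $\chi_{irc}(G)$ is the \emph{maximum} number of colors taken over all $IRC$-colorings, we conclude $\chi_{irc}(G) \geq \gamma(G)$.

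No step here is genuinely hard: the proposition is essentially the chaining together of three facts---the RC-is-dominating property of dominator colorings, the minimality of minimum dominating sets, and Proposition \ref{intro}. The only point worth handling carefully is the passage from ``dominating set of size $\gamma(G)$'' to ``minimal dominating set,'' to make sure one does not confuse minimum cardinality with minimality under inclusion; both notions coincide here precisely because the size equals $\gamma(G)$.
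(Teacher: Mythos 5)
Your proposal is correct and follows essentially the same route as the paper: every rainbow committee of the given dominator coloring is a minimum (hence minimal) dominating set, and Proposition \ref{intro} then makes it a (maximal) irredundant set, so the coloring is an $IRC$-coloring with $\gamma(G)$ colors. The only difference is that you spell out the intermediate step from ``minimum'' to ``minimal'' dominating set, which the paper leaves implicit; this is a worthwhile clarification but not a different argument.
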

\begin{proof}
	Let $\mathcal{C}$ be the dominator coloring of $G$ with $\chi_{d}(G)= \gamma(G)$. Then every RC with respect to $\mathcal{C}$ is a minimum dominating set of $G$. Since every minimum dominating set of $G$ is an irredundant set of $G$ {(by Proposition \ref{intro})} implies that $\mathcal{C}$ is an $IRC$-coloring of $G$ and hence $\chi_{irc}(G) \geq \gamma (G)$. \qed
	
\end{proof}

\section{Open Problems}\label{problem}
The following are some problems on irredundance chromatic number of a graph. Due to the results of Theorems \ref{thmchi} and \ref{ob1}, it is still possible to obtain the structures of graphs that are close to the bounds. We rise the problem that:
\vskip 5 pt

\begin{problem}
	Characterize graphs $G$ with $\chi_{i}(G)=3$.
\end{problem}
\begin{problem}
	Characterize graphs $G$ with $\chi_{i}(G)=n-1$.
\end{problem}
\begin{problem}
	Characterize graphs $G$ with $\chi_{i}(G)=\chi(G)$.
	\begin{problem}
		Characterize graphs $G$ with $\chi_{i}(G)= ir(G)$.
	\end{problem}
\end{problem}
\vskip 5 pt

\indent In the context of $IRC$-colorable graphs, it would be interesting to establish the necessary and sufficient condition of graphs that posses an $IRC$-coloring.
\vskip 5 pt

\begin{problem}
	Find necessary and sufficient condition for a graph to be $IRC$-colorable.
\end{problem}
\vskip 5 pt

\indent We believe that the following problems are still possible to solve.
\vskip 5 pt

\begin{problem}
	Characterize graphs $G$ with $\chi_{irc}(G)=2$.
\end{problem}
\begin{problem}
	Find an upper bound for $\chi_{irc}(G)$.
\end{problem}
\vskip 5 pt

\indent According to our collection of graphs that posses $IRC$-coloring, the graphs in Proposition \ref{propp} for example, once we obtain an graph $G$ that admit $IRC$-coloring with $\ell$ colors, we can always reduce the number of color to $\chi(G)$. We also believe that this is true for any $IRC$-colorable graphs. We then conjecture that:
\vskip 5 pt

\begin{conj}\label{con1}
	Any $IRC$-colorable graph $G$ admits an $IRC$-coloring using $\chi(G)$ number of colors.
\end{conj}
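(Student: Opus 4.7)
The plan hinges on the following subset lemma, which I would establish first: if $R$ is an irredundant set of $G$ and $R' \subseteq R$, then $R'$ is also irredundant, because $N[R' - \{v\}] \subseteq N[R - \{v\}]$ for every $v \in R'$, so $pn[v,R] \neq \emptyset$ implies $pn[v,R'] \neq \emptyset$. This yields a merging principle: if $\mathcal{C}$ is an IRC-coloring of $G$ and $\mathcal{C}'$ is any proper coloring obtained from $\mathcal{C}$ by combining some color classes into larger independent unions, then $\mathcal{C}'$ is again IRC, since every rainbow committee of $\mathcal{C}'$ is a subset of some rainbow committee of $\mathcal{C}$.

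My first attempt is an iterative merge: starting from an IRC-coloring with $k > \chi(G)$ colors, find two classes whose union is independent and merge them; repeat. Each merge preserves the IRC property and decreases the number of colors by one. If the process reaches $\chi(G)$ colors we are done. The difficulty is that the process can stop at a coloring with $k > \chi(G)$ classes in which every pair of classes is joined by an edge of $G$, leaving no legal merge. Such rigid colorings exist already at the level of proper colorings --- for example, $C_6$ admits a proper $3$-coloring whose three classes are pairwise joined by an edge, yet $\chi(C_6)=2$ --- so iterative merging alone cannot prove the conjecture.

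To clear the obstruction I would aim for the stronger statement that some proper $\chi(G)$-coloring $\mathcal{C}^*$ of any IRC-colorable graph $G$ is itself IRC. The bipartite case is immediate: by Proposition \ref{degreemore} we have $\delta(G) \geq 2$, and in a bipartite graph, for any edge $uv$, every neighbor of $u$ other than $v$ lies in the same part as $v$ and so is non-adjacent to $v$, producing a private neighbor of $u$ in $\{u,v\}$; together with the trivial irredundance of non-adjacent rainbow pairs, the bipartition is an IRC-coloring. For general $G$, I would fix an arbitrary proper $\chi(G)$-coloring $\mathcal{C}^*$ and argue that every rainbow committee $R$ is irredundant. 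Proposition \ref{p2} already handles the sub-case where $R$ induces a clique in $G$, since IRC-colorability forces $pn[v,Q] \neq \emptyset$ for every clique $Q$ of $G$ and every $v \in Q$.

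The hard part will be rainbow committees with mixed adjacency, where one has to exhibit a private neighbor for each $v \in R$ using only the local structure around $v$ together with the global constraints on $G$ implied by IRC-colorability (Propositions \ref{degreemore}, \ref{p1}, and \ref{p2}). I expect this case to require pairing the structural constraints with a controlled recoloring --- for instance Kempe-chain swaps between the candidate $\mathcal{C}^*$ and the given IRC-coloring --- so as either to locate a private neighbor directly, or to repair $\mathcal{C}^*$ into an IRC-coloring still using $\chi(G)$ colors without introducing new bad committees.
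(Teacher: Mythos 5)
This statement is Conjecture \ref{con1}, which the paper leaves open; there is no proof in the paper to compare against, so your proposal must stand on its own as a complete argument, and it does not. Your preliminary observations are correct and worth recording: irredundance is hereditary (the containment $N[R'-\{v\}] \subseteq N[R-\{v\}]$ gives $pn[v,R] \subseteq pn[v,R']$, so subsets of irredundant sets are irredundant), hence any proper coloring obtained from an $IRC$-coloring by merging color classes is again $IRC$; the $C_6$ example correctly shows that greedy merging can stall above $\chi(G)$; the bipartite case does follow from $\delta(G) \geq 2$ exactly as you argue; and the contrapositive of Proposition \ref{p2} does dispose of rainbow committees inducing cliques.

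The gap is that the entire content of the conjecture is concentrated in the case you defer to the last paragraph. For a non-bipartite $IRC$-colorable graph $G$ and a rainbow committee $R$ of a candidate $\chi(G)$-coloring with mixed adjacency, you give no mechanism for producing a private neighbor of a vertex $v \in R$: Propositions \ref{degreemore}, \ref{p1} and \ref{p2} are necessary conditions for $IRC$-colorability and constrain $G$ only locally, while whether $v$ has a private neighbor with respect to $R$ depends on how the other $\chi(G)-1$ representatives were chosen globally. Moreover, your intermediate target --- that some (or every) $\chi(G)$-coloring is itself $IRC$ --- is strictly stronger than the conjecture and is not justified; an arbitrary $\chi(G)$-coloring need not be a coarsening of the given $IRC$-coloring, so your merging principle does not transfer the $IRC$ property to it, and the proposed ``Kempe-chain repair'' is named but not carried out (you would need to show a swap cannot create a new non-irredundant committee, which is not addressed). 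As written, the proposal is a reasonable plan of attack with an honest acknowledgement of the missing step, but it is not a proof of the conjecture.
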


\section*{Acknowledgements}
The first author acknowledges that this research was supported by King
Mongkut's University of Technology Thonburi Postdoctoral Fellowship.

\end{document}